\renewcommand{\bar}{\overline}
\newcommand{\rr}{\mathbb{R}}
\newfont{\fnt}{cmr10 scaled 550}
\newtheorem{theorem}{Theorem}
\newtheorem{lemma}{Lemma}
\newtheorem{cor}{Corollary}
\newtheorem{prop}{Proposition}
\newtheorem{definition}{Definition}
\newtheorem{example}{Example}
\theoremstyle{remark}
\newtheorem{remark}{Remark}
\newcommand{\Ric}{\textrm{Ric}}
\begin{document}

\title[Stability properties  and gap theorem]{Stability properties  and gap theorem for complete $f$-minimal hypersurfaces}


 \subjclass[2000]{Primary: 58J50;
Secondary: 58E30}

\thanks{The authors were  partially supported by CNPq and Faperj of Brazil.}

\author{Xu Cheng}
\address{Instituto de Matematica e Estat\'\i stica, Universidade Federal Fluminense,
Niter\'oi, RJ 24020, Brazil, email: xcheng@impa.br}

\author[Detang Zhou]{Detang Zhou}
\address{Instituto de Matematica e Estat\'\i stica, Universidade Federal Fluminense,
Niter\'oi, RJ 24020, Brazil, email: zhou@impa.br}


\newcommand{\M}{\mathcal M}

\begin{abstract}   In this paper, we study complete  oriented $f$-minimal hypersurfaces  properly immersed  in a cylinder shrinking soliton $(\mathbb{S}^n\times \mathbb{R}, \overline{g}, f)$. We prove that such hypersurface with $L_f$-index one must be either $\mathbb{S}^n\times\{0\}$ or $\mathbb{S}^{n-1}\times\mathbb{R}$, where $\mathbb{S}^{n-1}$  denotes the  sphere in $\mathbb{S}^{n}$ of the same radius. Also we  prove a pinching theorem  for them.

\end{abstract}

\maketitle
\baselineskip=18pt




\section{Introduction}
The $f$-minimal hypersurfaces in smooth metric measure  spaces are  the generalization of self-shrinkers in the Euclidean space and   have been studied recently.   See, for instance, \cite{B}, \cite{CMZ1}, \cite{CMZ2}, \cite{CMZ3}, \cite{E}, \cite{Fa}, \cite{IR},  \cite{LW}.  Recall a  hypersurface  $(\Sigma,g)$ isometrically  immersed in a Riemannian manifold $(M,\bar{g})$ is called an $f$-minimal hypersurface if its   mean curvature $H$ satisfies that,  $$H=\langle \overline{\nabla} f,\nu\rangle,$$  where $\nu$ is the unit normal to $\Sigma$, $f$ is a smooth function on $M$, and $ \overline{\nabla} f$ denotes the gradient of $f$ on $M$. It is known that  an $f$-minimal  hypersurface can be viewed in  two basic ways:  1)  it is  a critical point of the weighted volume functional $\int_\Sigma e^{-f}d\sigma$ of $\Sigma$,  where $d\sigma$ denote the volume element of $(\Sigma, g)$; 2)  it is a minimal hypersurface in  $(M, \tilde{g})$, where the new metric $\tilde{g}=e^{-\frac{2}{n}f}\bar {g}$ of $M$ is conformal to $\bar{g}$.  Besides, $f$-minimal hypersurfaces appear  in the study of mean curvature flow of a hypersurfce in an  ambient manifold evolving by  Ricci flow. Recently
Lott \cite{Lo} and Magni-Mantegazza-Tsats \cite{MMT}    proved that Huisken's monotonicity formula holds  when the ambient is a   gradient Ricci soliton solution to the Ricci flow.  Lott  \cite{Lo} introduced the  concept of mean curvature soliton  for the mean curvature flow of a hypersurface in a    gradient Ricci soliton solution. By its definition,  a mean curvature soliton is just an  $f$-minimal hypersurface,  where  $f$ is the potential function of the ambient gradient Ricci soliton.   Some compactness theorem of $f$-minimal surfaces in three-dimensional  manifolds were proved by Cheng-Mejia-Zhou   \cite{CMZ1}, \cite{CMZ2}.  

Motivated by the above facts, Cheng-Mejia-Zhou \cite{CMZ3} studied  $f$-minimal hypersurfaces in a cylinder shrinking soliton of type $(\mathbb{S}^n\times \mathbb{R}, \overline{g}, f)$ with    the potential function $f$.
In \cite{CMZ3}, the authors  studied the drifted Laplacian $\Delta_f=\Delta f-\langle {\nabla}f, \nabla\cdot\rangle$ of  geometric quantities for $f$-minimal hypersurfaces  and derived a Simons' type equation and some other equations. These equations  involve  the Barky-\'Emery Ricci curvature $\overline{Ric}_f:=\overline{Ric}+\overline{\nabla}^2f$ of the ambient manifold and   have simpler expressions when the ambient manifold is  a gradient Ricci soliton. 
Furthermore,   the stability properties and gap phenomenon of
closed, i.e.,  compact and  without boundary,  $f$-minimal hypersurfaces   immersed in a cylinder shrinking soliton of type $(\mathbb{S}^n\times \mathbb{R}, \overline{g}, f)$  were studied. Especially, 
the classification  of the closed immersed $f$-minimal hypersurfaces with $L_f$-index one and  a pinching theorem of the norm  $|A|^2$ of the second fundamental form were obtained (see the definitions of $L_f$ operator and $L_f$-index in Section \ref{sec-notation}).  In this paper, we will discuss the complete noncompact case. 
First, we prove the following

\begin{theorem} \label{th-index}   Let  $\Sigma$ be a  complete  oriented $f$-minimal hypersurface  properly immersed in
the  cylinder shrinking soliton $(\mathbb{S}^{n}(\sqrt{2(n-1)})\times\mathbb{R}, \overline{g}, f)$. Then $L_f$-index of $\Sigma$ satisfies that $L_f$-$\textrm{ind}(\Sigma)\geq 1$.

Moreover $L_f$-index of $\Sigma$ is one   if and only if $\Sigma$  is  either $\mathbb{S}^n(\sqrt{2(n-1)})\times\{0\}$ or $\mathbb{S}^{n-1}(\sqrt{2(n-1)})\times\mathbb{R}$, where $\mathbb{S}^{n-1}(\sqrt{2(n-1)})$  denotes the totally geodesic sphere of  $\mathbb{S}^{n}(\sqrt{2(n-1)})$.
\end{theorem}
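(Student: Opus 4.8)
The plan is to work throughout with the stability operator $L_f=\Delta_f+|A|^2+\overline{\textrm{Ric}}_f(\nu,\nu)$ and its quadratic form $Q(v,w)=\int_\Sigma\big(\langle\nabla v,\nabla w\rangle-(|A|^2+\overline{\textrm{Ric}}_f(\nu,\nu))vw\big)e^{-f}\,d\sigma=-\int_\Sigma v\,L_fw\,e^{-f}\,d\sigma$. Since the cylinder is a shrinking soliton, $\overline{\textrm{Ric}}_f=\tfrac12\overline g$, so $\overline{\textrm{Ric}}_f(\nu,\nu)\equiv\tfrac12$ and $L_f=\Delta_f+|A|^2+\tfrac12$. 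First I would record that, because $\Sigma$ is properly immersed and $f=\tfrac14 t^2\to\infty$ off compact subsets of the cylinder, the weighted volume $V_f:=\int_\Sigma e^{-f}\,d\sigma$ is finite and $-\Delta_f$ has discrete spectrum; hence $L_f\textrm{-ind}(\Sigma)$ is the finite number of negative eigenvalues of $-L_f$ and is computed variationally from $Q$. The lower bound $L_f\textrm{-ind}(\Sigma)\ge1$ then follows at once by testing with the constant $1\in W^{1,2}_f(\Sigma)$, since $Q(1,1)=-\int_\Sigma(|A|^2+\tfrac12)e^{-f}\,d\sigma<0$.

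Next I would isolate two canonical functions. Writing $\partial_t$ for the unit field of the $\mathbb R$-factor (parallel in $\overline g$) and $u:=\langle\nu,\partial_t\rangle$, I would verify, using the equations derived in \cite{CMZ3} together with $\overline{\textrm{Ric}}_f=\tfrac12\overline g$, the eigenfunction identity $L_fu=\tfrac12 u$; this is consistent with the two models, where $u\equiv1$ on $\mathbb S^n\times\{0\}$ and $u\equiv0$ on $\mathbb S^{n-1}\times\mathbb R$. For the height $h:=t|_\Sigma$, a direct computation using $\overline\nabla^2t=0$ and $H=\langle\overline\nabla f,\nu\rangle=\tfrac12 t\,u$ gives $\Delta_f h=-\tfrac12 h$, hence $L_fh=|A|^2h$. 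Both $u$ and $h$ lie in $L^2_f$ (indeed $|u|\le1$ and $\int_\Sigma h^2e^{-f}<\infty$), which is what makes them legitimate trial objects in the noncompact setting.

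Assume now $L_f\textrm{-ind}(\Sigma)=1$ and split on $u$. If $u\not\equiv0$, then $-\tfrac12$ is a negative eigenvalue of $-L_f$; since the index is $1$, the bottom eigenvalue must be $\mu_0=-\tfrac12$. Combining with the variational bound $\mu_0\le Q(1,1)/V_f=-\tfrac12-V_f^{-1}\int_\Sigma|A|^2e^{-f}\,d\sigma$ forces $\int_\Sigma|A|^2e^{-f}=0$, i.e.\ $\Sigma$ is totally geodesic. Then $\nabla u=-A(\nabla h)=0$, so $u\equiv c\neq0$; totally geodesic gives $H\equiv0$, while $f$-minimality gives $H=\tfrac12 h\,u$, whence $h\equiv0$ and $\Sigma\subset\mathbb S^n\times\{0\}$, so $\Sigma=\mathbb S^n(\sqrt{2(n-1)})\times\{0\}$.

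If instead $u\equiv0$, then $\partial_t$ is everywhere tangent, so $\Sigma=\Sigma_0\times\mathbb R$ with $\Sigma_0$ a closed minimal hypersurface of $\mathbb S^n(\sqrt{2(n-1)})$ and $|A|^2=|A_{\Sigma_0}|^2$ independent of $t$. By parity in $t$ the functions $1$ and $h$ are orthogonal in $L^2_f$, and $Q(1,h)=-\int_\Sigma|A|^2h\,e^{-f}=0$; if $A\not\equiv0$, then $Q(1,1)<0$ and $Q(h,h)=-\int_\Sigma|A|^2h^2e^{-f}<0$ make $Q$ negative definite on $\textrm{span}\{1,h\}$, forcing $L_f\textrm{-ind}(\Sigma)\ge2$, a contradiction. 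Hence $A\equiv0$, $\Sigma_0$ is a totally geodesic equator $\mathbb S^{n-1}(\sqrt{2(n-1)})$, and $\Sigma=\mathbb S^{n-1}(\sqrt{2(n-1)})\times\mathbb R$. The main obstacle I anticipate is the analytic foundation in the complete noncompact case: justifying discreteness of the spectrum and the admissibility of $1,u,h$ in the form domain, and in particular establishing the eigenvalue identity $L_fu=\tfrac12u$ rigorously; once these are in place, the index bookkeeping above is elementary.
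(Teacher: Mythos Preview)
Your argument is correct in outline, but it takes a genuinely different route from the paper's proof, and in fact is more self-contained. In the case $u\not\equiv0$, the paper argues as follows: since $L_fu=\tfrac12u$ and the index is one, $\mu_1(L_f)=-\tfrac12$; Lemma~\ref{A-finite} then gives $\int_\Sigma|A|^2e^{-f}<\infty$, so $u\in W^{1,2}(e^{-f}d\sigma)$ and Proposition~\ref{eigenfunction1} forces $u>0$; finally the integral identity \eqref{alpha-eq-2}, $\int_\Sigma|A|^2u\,e^{-f}=0$, yields $A\equiv0$. Your variational shortcut---comparing $\mu_1=-\tfrac12$ with the Rayleigh quotient of the constant~$1$, which gives $\mu_1\le-\tfrac12-V_f^{-1}\int_\Sigma|A|^2e^{-f}$---reaches the same conclusion without invoking either the positivity of the bottom eigenfunction or the integral identities of Section~\ref{sec-integral}. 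In the case $u\equiv0$, the paper writes $\Sigma=\Sigma_1\times\mathbb{R}$, identifies $L_f$ with $J_{\Sigma_1}+\bigl(\tfrac{d^2}{dh^2}-\tfrac{h}{2}\tfrac{d}{dh}\bigr)$, and then appeals to Simons' classification of closed index-one minimal hypersurfaces in $\mathbb{S}^n$ to conclude $\Sigma_1=\mathbb{S}^{n-1}$. Your approach instead exhibits the explicit $Q$-orthogonal pair $\{1,h\}$ with $Q(1,1)<0$ and $Q(h,h)=-\int_\Sigma|A|^2h^2e^{-f}<0$ whenever $A\not\equiv0$, giving index $\ge2$ directly; this bypasses Simons' theorem entirely.

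Two small points. First, your assertion that discreteness of the spectrum of $-\Delta_f$ implies that $L_f\textrm{-ind}(\Sigma)$ equals the number of negative eigenvalues of $-L_f$ is not justified as stated: without a priori control on $|A|^2$ the perturbation may not be relatively bounded. This is inessential, however, since once you assume index one, Proposition~\ref{mu1} already gives $\mu_1>-\infty$, and your cutoff approximation of the constant $1$ then shows $\mu_1\le-\tfrac12-V_f^{-1}\int_\Sigma|A|^2e^{-f}$ regardless of whether the last integral is finite (if it is infinite, $\mu_1=-\infty$, a contradiction). Second, the sign in $\nabla u=-A(\nabla h)$ should be $+$, though this does not affect the conclusion.
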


\noindent  Here $\mathbb{S}^n(\sqrt{2(n-1)})$ denote the  round sphere in the Euclidean space $\mathbb{R}^{n+1}$ of radius $\sqrt{2(n-1)}$ centered at the origin. The cylinder shrinking soliton  $(\mathbb{S}^{n}(\sqrt{2(n-1)})\times\mathbb{R},\overline{g}, f)$ is a triple satisfying that
 it has the product metric $\overline{g}$ of $(\mathbb{S}^{n}(\sqrt{2(n-1)})$ and $\mathbb{R}$,  $f(x,h)=\frac {h^2}{4}, (x,h)\in (\mathbb{S}^{n}(\sqrt{2(n-1)})\times\mathbb{R}$ (see more details in Section \ref{sec-notation}).

\noindent  Secondly, we prove a pinching theorem  as follows:
 
 \begin{theorem}\label{th-pinch}
Let   $\Sigma$ be a complete oriented $f$-minimal hypersurface properly immersed in the  cylinder shrinking soliton $(\mathbb{S}^{n}(\sqrt{2(n-1)})\times\mathbb{R}, \overline{g}, f)$,  $n\geq 3$. Assume that the norm $|A|$ of the second fundamental form of $\Sigma$ satisfies
\begin{equation}\label{pinch-1}
\left| |A|^2-\frac14\right|\leq \frac14\biggr(\sqrt{1-\frac{8}{n-1}\alpha^2(1-\alpha^2)}\biggr),
\end{equation}
where $\alpha=\langle\frac{\partial}{\partial h}, \nu\rangle$,  $h$ denotes the coordinate of the second factor $\mathbb{R}$ in $(\mathbb{S}^{n}(\sqrt{2(n-1)})\times\mathbb{R}$.
Then $\Sigma$ must be one of the following three cases: 
\begin{itemize}
\item  $\mathbb{S}^{n}(\sqrt{2(n-1)})\times\{0\}$, 
\item $\mathbb{S}^{n-1}(\sqrt{2(n-1)})\times\mathbb{R}$, and
\item  $T\times \mathbb{R}$, where $T$ denotes the minimal Clifford torus $\mathbb{S}^k(\sqrt{2k})\times\mathbb{S}^l(\sqrt{2l})\subset \mathbb{S}^n(\sqrt{2(n-1)})$ in $ \mathbb{S}^{n}(\sqrt{2(n-1)})$,  $k+l=n-1$.
\end{itemize}
 \end{theorem}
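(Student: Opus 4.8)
The plan is to recast the pinching hypothesis as a clean pointwise inequality, insert it into the Simons-type equation of \cite{CMZ3}, and then run a weighted integration-by-parts argument strong enough to survive the noncompactness; the equality discussion will afterwards single out the three models. First I would square \eqref{pinch-1}. Since $n\ge3$ gives $\frac{8}{n-1}\alpha^2(1-\alpha^2)\le\frac{2}{n-1}\le1$, the radicand is always nonnegative and squaring is legitimate, producing the equivalent inequality
\begin{equation*}
|A|^2\Big(\tfrac12-|A|^2\Big)\ \ge\ \frac{1}{2(n-1)}\,\alpha^2(1-\alpha^2).\tag{$\star$}
\end{equation*}
In particular $(\star)$ forces the uniform bound $|A|^2\le\tfrac12$, which is exactly the ingredient the global analysis needs, and which also explains the hypothesis $n\ge3$.

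Next I would use the Simons-type equation for $f$-minimal hypersurfaces of the cylinder soliton established in \cite{CMZ3}. Combined with the algebraic estimate for its zeroth-order part --- the reaction term coming from $\overline{Ric}_f(\nu,\nu)=\tfrac12$ together with the correction produced by splitting the ambient curvature into its $\mathbb{S}^n$ and $\mathbb{R}$ contributions --- it yields a pointwise differential inequality of the shape
\begin{equation*}
\tfrac12\,\Delta_f|A|^2\ \ge\ |\nabla A|^2 + |A|^2\Big(\tfrac12-|A|^2\Big) - \frac{1}{2(n-1)}\,\alpha^2(1-\alpha^2).
\end{equation*}
By $(\star)$ the last two terms combine into a nonnegative quantity $\mathcal P\ge0$, so $\tfrac12\Delta_f|A|^2\ge|\nabla A|^2+\mathcal P$ pointwise.

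The core of the noncompact case is to convert this into a vanishing statement. Because $\Sigma$ is properly immersed, its weighted volume $\int_\Sigma e^{-f}\,d\sigma$ is finite (polynomial volume growth, as for properly immersed self-shrinkers). Choosing cutoffs $\varphi_R$ equal to $1$ on the intrinsic ball $B_R$, supported in $B_{2R}$, with $|\nabla\varphi_R|\le C/R$, gives $\int_\Sigma|\nabla\varphi_R|^2e^{-f}\le CR^{-2}\int_\Sigma e^{-f}\to0$. Multiplying the inequality by $\varphi_R^2$, integrating against $e^{-f}$, integrating $\Delta_f$ by parts, and applying Young's inequality together with the Kato bound $\big|\nabla|A|^2\big|^2\le4|A|^2|\nabla A|^2\le2|\nabla A|^2$ (where $|A|^2\le\tfrac12$ is used), I would arrive at
\begin{equation*}
\tfrac12\int_\Sigma\varphi_R^2|\nabla A|^2e^{-f}+\int_\Sigma\varphi_R^2\,\mathcal P\,e^{-f}\ \le\ \int_\Sigma|\nabla\varphi_R|^2e^{-f}\ \longrightarrow\ 0.
\end{equation*}
Both integrands being nonnegative, letting $R\to\infty$ forces $\nabla A\equiv0$ and $\mathcal P\equiv0$, i.e.\ equality in $(\star)$ everywhere.

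The rigidity is then a case analysis. From $\nabla A\equiv0$ the quantities $|A|^2$ and $H=\mathrm{tr}\,A$ are constant. Since $\nabla_X\alpha=-\langle A(\partial_h^{\top}),X\rangle$ with $\partial_h^{\top}$ the tangential part of $\frac{\partial}{\partial h}$, equality in $(\star)$ together with constancy of $|A|^2$ forces $\alpha^2(1-\alpha^2)$, hence $\alpha$, to be locally constant; the $f$-minimality relation $H=\tfrac{h}{2}\alpha$ with $H$ constant then rules out $\alpha\in(0,1)$ (else $h$, and so $H$, would be nonconstant on $\Sigma$), leaving $\alpha\equiv\pm1$ or $\alpha\equiv0$. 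If $\alpha\equiv\pm1$ then $\nu=\pm\frac{\partial}{\partial h}$, so $\Sigma$ is a slice, $f$-minimal only at $h=0$, namely $\mathbb{S}^n(\sqrt{2(n-1)})\times\{0\}$. If $\alpha\equiv0$ then $\frac{\partial}{\partial h}$ is tangent and $\Sigma=\Sigma'\times\mathbb{R}$ with $\Sigma'\subset\mathbb{S}^n(\sqrt{2(n-1)})$ minimal and of parallel second fundamental form; equality in $(\star)$ gives $|A|^2\in\{0,\tfrac12\}$, and the classification of such $\Sigma'$ (totally geodesic, or the minimal Clifford torus realizing the Chern--do Carmo--Kobayashi value $|A|^2=\tfrac12$) produces the remaining two models. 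I expect the genuine obstacle to be the global step --- securing finiteness of the weighted volume and justifying the boundary-term-free integration by parts for a merely properly immersed complete $\Sigma$ --- where the bound $|A|^2\le\tfrac12$ from the pinching is precisely what makes the cutoff estimate close.
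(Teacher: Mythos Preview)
Your reformulation $(\star)$ and the final case analysis are essentially correct, but the central analytic step has a genuine gap: the pointwise differential inequality
\[
\tfrac12\,\Delta_f|A|^2\ \ge\ |\nabla A|^2 + |A|^2\Big(\tfrac12-|A|^2\Big) - \frac{1}{2(n-1)}\,\alpha^2(1-\alpha^2)
\]
does \emph{not} follow from the Simons-type equation of \cite{CMZ3}. The actual identity (equation \eqref{delta-3} in the paper) reads
\[
\tfrac12\Delta_f|A|^2=|\nabla A|^2+|A|^2\Big(\tfrac12-|A|^2\Big)-\frac{1}{n-1}\big(|\nabla\alpha|^2-\alpha^2|A|^2\big)-\frac{1}{n-1}\big(\alpha^2 f-\langle\nabla\alpha^2,\nabla f\rangle\big),
\]
and the last bracket contains $\alpha^2 f=H^2=\tfrac{h^2}{4}\alpha^2$, which is unbounded along $\Sigma$ whenever $\alpha\not\equiv0$. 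No algebraic manipulation of the zeroth-order terms can dominate this by the bounded quantity $\tfrac12\alpha^2(1-\alpha^2)$ pointwise, so your cutoff argument (however cleverly you use $|A|^2\le\tfrac12$ in the Kato step) never gets off the ground.

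The paper circumvents this by proving \emph{integral identities}, not pointwise inequalities. The key observation is $\Delta_f f=\tfrac12(1-\alpha^2)-f$; after multiplying by cutoffs and integrating by parts, the unbounded term $\alpha^2 f$ cancels against the contribution from $\langle\nabla\alpha^2,\nabla f\rangle$ and one is left with exactly $\tfrac12\int\alpha^2(1-\alpha^2)e^{-f}$ (this is \eqref{eq-varphi-alpha-1}). That, together with the companion identity \eqref{alpha-eq} for $|\nabla\alpha|^2-\alpha^2|A|^2$, yields the clean integral equality \eqref{lem-A-eq}; your $(\star)$ then forces $\nabla A\equiv0$ and equality in $(\star)$. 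The paper also invokes the second integral identity \eqref{lem-H-eq} (for $H^2$) to obtain $\alpha^2(1-\alpha^2)\equiv0$ and $H\equiv0$ directly, which is cleaner than your route through ``$\alpha$ locally constant'' (the latter implicitly uses connectedness of $\Sigma$ and the two-valuedness of solutions to $x(1-x)=c$). In short: the missing idea is that the $f$-dependent terms in the Simons equation only simplify \emph{after} a weighted integration by parts exploiting $\Delta_f f=\tfrac12(1-\alpha^2)-f$, and this is where the real work in the noncompact case lies.
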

\noindent Theorem \ref{th-pinch}   implies   that 
\begin{cor}\label{cor-gap}There is no any complete oriented  $f$-minimal  hypersurface properly immersed  in the  cylinder shrinking soliton $(\mathbb{S}^{n}\bigr(\sqrt{2(n-1)}\bigr)\times\mathbb{R}, \overline{g}, f)$, $n\geq3$, so that its  norm $|A|$ of second fundamental form  satisfies 
\begin{equation*}
\frac14\biggr(1-\sqrt{1-\frac{2}{n-1}}\biggr)\leq |A|^2\leq \frac14\biggr(1+\sqrt{1-\frac{2}{n-1}}\biggr).
\end{equation*}
\end{cor}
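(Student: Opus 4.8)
The plan is to derive Corollary~\ref{cor-gap} directly from Theorem~\ref{th-pinch} by a pointwise comparison of the two pinching bounds, followed by a computation of $|A|^2$ for the three model hypersurfaces. I would argue by contradiction. Suppose there exists a complete oriented $f$-minimal hypersurface $\Sigma$ properly immersed in $(\mathbb{S}^{n}(\sqrt{2(n-1)})\times\mathbb{R},\overline{g},f)$ whose second fundamental form satisfies $\frac14(1-\sqrt{1-\frac{2}{n-1}})\le|A|^2\le\frac14(1+\sqrt{1-\frac{2}{n-1}})$ at every point, which is equivalent to $\left||A|^2-\frac14\right|\le\frac14\sqrt{1-\frac{2}{n-1}}$.

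The first step is to verify that this hypothesis forces the sharper condition \eqref{pinch-1} of Theorem~\ref{th-pinch}. The elementary inequality $\alpha^2(1-\alpha^2)\le\frac14$, coming from the maximum of $t(1-t)$ on $[0,1]$ at $t=\frac12$, gives $1-\frac{8}{n-1}\alpha^2(1-\alpha^2)\ge 1-\frac{2}{n-1}$ at every point of $\Sigma$, and hence $\frac14\sqrt{1-\frac{8}{n-1}\alpha^2(1-\alpha^2)}\ge\frac14\sqrt{1-\frac{2}{n-1}}$. Combining this with the assumed bound on $|A|^2$ shows that \eqref{pinch-1} holds pointwise, so Theorem~\ref{th-pinch} applies and $\Sigma$ must be one of its three model hypersurfaces.

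The second step is to compute $|A|^2$ for each model and check that none of the resulting values lies in the interval above. The slice $\mathbb{S}^{n}(\sqrt{2(n-1)})\times\{0\}$ and the cylinder $\mathbb{S}^{n-1}(\sqrt{2(n-1)})\times\mathbb{R}$ are both totally geodesic in the product, since the equatorial $\mathbb{S}^{n-1}$ is totally geodesic in $\mathbb{S}^{n}$ and the $\mathbb{R}$ factor is flat; thus $|A|^2\equiv 0$ on each. For the third model $T\times\mathbb{R}$, the flat factor contributes nothing to the second fundamental form, so $|A|^2$ equals that of the minimal Clifford torus $T=\mathbb{S}^{k}(\sqrt{2k})\times\mathbb{S}^{l}(\sqrt{2l})$ inside $\mathbb{S}^{n}(\sqrt{2(n-1)})$. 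Rescaling the classical value $|A|^2=n-1$ on the unit sphere by the factor $1/R^2=1/(2(n-1))$ yields $|A|^2\equiv\frac12$.

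It then remains only to observe that the interval $\left[\frac14(1-\sqrt{1-\frac{2}{n-1}}),\ \frac14(1+\sqrt{1-\frac{2}{n-1}})\right]$ is disjoint from $\{0,\tfrac12\}$. For $n\ge3$ one has $0\le\sqrt{1-\frac{2}{n-1}}<1$, whence $0<\frac14(1-\sqrt{1-\frac{2}{n-1}})$ and $\frac14(1+\sqrt{1-\frac{2}{n-1}})<\frac12$, so both $0$ and $\frac12$ lie strictly outside the interval. Consequently none of the three model values of $|A|^2$ can meet the assumed pinching, contradicting the conclusion of Theorem~\ref{th-pinch}; therefore no such $\Sigma$ exists. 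The argument is short, so I do not expect a genuine obstacle: the only substantive inputs beyond Theorem~\ref{th-pinch} are the maximization $\alpha^2(1-\alpha^2)\le\frac14$ and the value $|A|^2=\frac12$ for the Clifford cylinder, and the main care needed is to pin down that value correctly via rescaling and to confirm the strictness of the two endpoint inequalities.
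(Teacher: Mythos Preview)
Your proof is correct and follows essentially the same route as the paper, which dispatches the corollary in one sentence by invoking Theorem~\ref{th-pinch} together with the inequality $4\alpha^2(1-\alpha^2)\le 1$. You have simply spelled out the implicit verification that none of the three model hypersurfaces (with $|A|^2\equiv 0$ or $|A|^2\equiv\tfrac12$) meets the stated interval.
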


 \begin{remark} The cases of closed $f$-minimal hypersurfaces in Theorems \ref{th-index} and \ref{th-pinch} and Corollary \ref{cor-gap}  were proved in \cite{CMZ3} (see \cite{CMZ3} Th.1, Th.2, and Corollary 1). 
 \end{remark}
 For an immersed  $f$-minimal hypersurface  in a shrinking gradient Ricci soliton with certain condition on $f$, the properness of  its immersion,  its  polynomial volume growth, and  its  finite weighted volume are equivalent each other (\cite{CMZ2}).  This guarantees  integrability of some weighted integrals. Furthermore if there are  some integrability conditions on  $|A|^2$, one may obtain the integral identities, which are derived from Simons' type equation and other identities  (Propositions \ref{lem-alpha} and \ref{lem-A}). As an immediate application of the integral identities,  we  prove the rigidity of complete totally geodesic $f$-minimal hypersurfaces in $(\mathbb{S}^{n}\bigr(\sqrt{2(n-1)}\bigr)\times\mathbb{R}, \overline{g}, f)$ as follows:
 \begin{theorem}\label{lem-geodesic} Let $\Sigma$ be a complete oriented $f$-minimal hypersurface properly  immersed in $(\mathbb{S}^{n}(\sqrt{2(n-1)})\times\mathbb{R},\bar{g}, f)$. Then $\Sigma$ is totally geodesic if and only if it  is  either $\mathbb{S}^n(\sqrt{2(n-1)})\times\{0\}$ or $\mathbb{S}^{n-1}(\sqrt{2(n-1)})\times\mathbb{R}$.
\end{theorem}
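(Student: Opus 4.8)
The plan is to prove the two implications separately, obtaining the easy direction by direct computation and the rigidity direction by a pointwise geometric argument centred on the angle function $\alpha=\langle\partial_h,\nu\rangle$. Throughout I use that $\partial_h$ is parallel in the product and that $\overline{\nabla}f=\frac{h}{2}\partial_h$, so $f$-minimality reads $H=\langle\overline{\nabla}f,\nu\rangle=\frac{h}{2}\alpha$. First I would verify the ``if'' direction. Both candidates are totally geodesic in the product: $\mathbb{S}^{n}(\sqrt{2(n-1)})\times\{0\}$ is a slice over the sphere factor, and $\mathbb{S}^{n-1}(\sqrt{2(n-1)})\times\mathbb{R}$ is a totally geodesic great subsphere times the line. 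They are $f$-minimal because on the first $\overline{\nabla}f=0$ along $h=0$, hence $H=0$, while on the second $\nu$ is tangent to the sphere factor, so $\alpha\equiv0$ and $H=\frac{h}{2}\alpha=0$.

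For the ``only if'' direction, suppose $\Sigma$ is connected and totally geodesic, i.e. the second fundamental form $A\equiv0$. Two facts drive the argument. First, $H=\operatorname{tr}A=0$, so $f$-minimality forces $\frac{h}{2}\alpha\equiv0$ on $\Sigma$. Second, differentiating $\alpha$ along $X\in T\Sigma$ and using that $\partial_h$ is parallel together with the Weingarten relation gives $\nabla^{\Sigma}\alpha=-A(\partial_h^{\top})=0$, so $\alpha\equiv\alpha_0$ is constant. I would then run a case analysis on $\alpha_0$. If $|\alpha_0|=1$ then $\nu=\pm\partial_h$, so $\partial_h^{\top}=0$, the height $h$ is constant on $\Sigma$, and $\frac{h}{2}\alpha\equiv0$ forces $h\equiv0$; a dimension count then identifies $\Sigma=\mathbb{S}^{n}(\sqrt{2(n-1)})\times\{0\}$. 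If $\alpha_0=0$ then $\partial_h$ is tangent to $\Sigma$, and since $\Sigma$ is totally geodesic the integral lines $\{x\}\times\mathbb{R}$ lie in $\Sigma$, so $\Sigma=P\times\mathbb{R}$ with $P\subset\mathbb{S}^{n}(\sqrt{2(n-1)})$ totally geodesic, whence $P=\mathbb{S}^{n-1}(\sqrt{2(n-1)})$. Finally, if $0<|\alpha_0|<1$, then $\frac{h}{2}\alpha_0\equiv0$ gives $h\equiv0$, so $\Sigma\subset\mathbb{S}^{n}(\sqrt{2(n-1)})\times\{0\}$ and a dimension count yields $\Sigma=\mathbb{S}^{n}(\sqrt{2(n-1)})\times\{0\}$, forcing $\alpha_0=\pm1$, a contradiction; this intermediate case is therefore empty.

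The hard part will be upgrading these pointwise/local conclusions to global rigidity: in the $|\alpha_0|=1$ case I must use completeness and properness (equivalently the finite weighted volume recalled just before the theorem) to pass from ``$h\equiv0$ with tangent spaces equal to the sphere directions'' to the statement that $\Sigma$ is an entire slice, and in the $\alpha_0=0$ case I must check that the line foliation is complete and closes up to $P\times\mathbb{R}$ with $P$ a \emph{complete} totally geodesic hypersurface of the round sphere. As an alternative to this direct route one can feed $A\equiv0$ into the integral identities of Propositions \ref{lem-alpha} and \ref{lem-A}; there the weighted integrations by parts are legitimate precisely because properness supplies the required integrability, and the identities then force $\alpha$ to be constant and collapse $\Sigma$ to the same two model hypersurfaces.
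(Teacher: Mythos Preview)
Your proof is correct, but it takes a genuinely different route from the paper's. The paper establishes $\alpha^2(1-\alpha^2)\equiv 0$ by feeding $A\equiv 0$ into the integral identity \eqref{lem-H-eq} of Proposition~\ref{lem-A} (this is exactly the ``alternative'' you mention at the end), and then splits into the two cases $\alpha^2\equiv 1$ and $\alpha\equiv 0$. You instead observe directly that $\nabla\alpha=A(\partial_h^{\top})$ (the sign is $+$ in the paper's convention, cf.\ the line after \eqref{eq-H}), so $A\equiv 0$ forces $\alpha$ to be constant without any integration; the case analysis then runs the same way. Your pointwise argument is more elementary: it does not invoke properness or the weighted integrability machinery at all in the step that pins down $\alpha$, whereas the paper's route needs properness to justify \eqref{lem-H-eq}. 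On the other hand, the paper deliberately proves Theorem~\ref{lem-geodesic} via Proposition~\ref{lem-A} because those integral identities are the engine for Theorems~\ref{th-index} and~\ref{th-pinch}, so presenting this rigidity result as a first application is structurally natural there. Either way, the endgame in both cases ($\alpha^2\equiv 1$ giving the slice via Example~\ref{ex-fmin-1}, and $\alpha\equiv 0$ giving $\Sigma=\Sigma_1\times\mathbb{R}$ with $\Sigma_1$ a closed totally geodesic hypersurface of the sphere) is the same and your handling of it is fine; the ``hard part'' you flag is indeed routine once completeness is in hand.
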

 
 When $f$-minimal hypersurfaces has $L_f$-index one, the results on the spectrum of the operator $L_f$ imply the integrability of $|A|^2$ (Lemma \ref{A-finite}). Hence the adapted integral identity can be used in order to make classification (Theorem \ref{th-index}). Also the pinching condition of $|A|^2$ make the use of integral identities possible in the proof of Theorem \ref{th-pinch}.
 
We  mentioned that for self-shrinkers, the stability properties  were studied by Colding-Minicozzi \cite{CM3} and Hussey \cite{H}, and gap phenomenon was discussed, for instance, by Le-Sesum \cite{LS} and  Cao-Li \cite{CL}. 

The rest of this paper is organized as follows: In Section \ref{sec-notation}, we give some notations and conventions as preliminaries; In Section \ref{sec-weight}, we give some spectrum properties of the operator $\Delta_f+q$ which will be used in Section \ref{sec-index};  In Section \ref{sec-integral}, we prove some integral identities for $f$-minimal hypersurfaces and  Theorem 3; In Section \ref{sec-index}, we study  $L_f$-index and prove Theorem \ref{th-index};  In Section \ref{sec-pinch}, the proof of Theorem \ref{th-pinch} is given.


\section{Notations and conventions}\label{sec-notation}

 Throughout this paper, we will use the same conventions and notations as in \cite{CMZ3}, unless otherwise specified. For instance,  $(M^{n+1}, \overline{g}, e^{-f}d\mu)$  denotes a smooth metric measure space, where $f$ is a smooth function on $M$ and $d\mu$ is the volume element of $M$ induced by the metric $\bar{g}$.  $(\Sigma, g)$ denotes a hypersurface isometrically immersed in $(M,\overline{g})$ respectively.  We denote by a bar all quantities on $(M, \bar{g})$, for instance by $\overline{\nabla}$,  $\overline{\textrm{Ric}}$, and $\overline{\text{Ric}}_{f}$
 the Levi-Civita connection,  the  Ricci curvature tensor, and the Bakry-\'Emery Ricci curvature of $(M, \bar{g})$ respectively.  We denote still by  $f$  the restriction of $f$ on $\Sigma$.
Also we denote for instance by ${\nabla}$,  $\textrm{Ric}$, $\Delta$, and $d\sigma$, 
 the Levi-Civita connection, the  Ricci curvature tensor,  the Laplacian,  and the volume element of $(\Sigma, g)$ respectively.
 The second fundamental form $A$  of $\Sigma$ is defined  by 
$$A: T_p\Sigma\times T_p\Sigma\to \mathbb{R}, \quad  A(X,Y)=\langle\overline{\nabla}_X \nu, Y\rangle, $$
\noindent where $p\in \Sigma, X, Y\in T_p \Sigma$, $\nu$
is a unit normal vector at $p$. In a local orthonormal  system $\{e_i\}, i=1,\ldots, n$ of $\Sigma$, the components of $A$ are denoted by $a_{ij}=A(e_i,e_j)=\langle\overline{\nabla}_{e_i}\nu, e_j\rangle$.
The mean curvature $H$ and the weighted mean curvature of $\Sigma$ is  defined, respectively,  by \begin{equation*}\begin{split}
H&=\text{tr}A=\displaystyle\sum_{i=1}^na_{ii},\\
H_f&=H-\langle \overline{\nabla} f,\nu\rangle.
\end{split}
\end{equation*}

\noindent   $(\Sigma,g)$ is called  $f$-minimal  if its   mean curvature $H$ satisfies that
$$H=\langle \overline{\nabla} f,\nu\rangle,$$ 
 for any $p\in \Sigma$,  
 where $\nu$ is the unit normal to $\Sigma$. 
 
 The drifted Laplacian $\Delta_f$ and  $L_f$ operator on $\Sigma$ are  defined by 
\begin{align*}\Delta_f&=\Delta-\langle\nabla f,\nabla\cdot\rangle,\\
L_f&=\Delta_f+|A|^2+\overline{Ric}_f(\nu,\nu).
\end{align*}  
It holds that  for $u\in C_0^{1}(\Sigma)$,  $w\in C^2(\Sigma)$,
$$\int_{\Sigma} u\Delta_fwe^{-f}dv=-\int_{\Sigma}\langle\nabla u,\nabla w\rangle e^{-f}dv.$$
Since $\Delta_f$ is self-adjoint in the weighted space $L^2(e^{-f}d\sigma)$, we may define a symmetric bilinear form on the space $C_0^{\infty}(\Sigma)$  of compactly supported smooth functions on $\Sigma$ by, for $\phi, \psi\in C_0^{\infty}(\Sigma)$,
\begin{equation}\begin{split}
B_f(\phi, \psi):&=-\int_\Sigma \phi L_f\psi e^{-f}d\sigma\\
&=\int_{\Sigma}[\langle\nabla\phi,\nabla \psi\rangle-(|A|^{2}+\overline{\textrm{Ric}}_{f}(\nu,\nu))\phi\psi]e^{-f}d\sigma.
\end{split}
\end{equation}

Now suppose that $(\Sigma, g)$ is an $f$-minimal hypersurface.   $\Sigma$ is called $L_f$-stable if $B_f(\phi,\phi)\geq 0$ for all $\phi\in C_0^{\infty}(\Sigma)$.
 It is known that $L_f$-stability $\Sigma$ means that the weighted volume $\int_{\Sigma}e^{-f}d\sigma$  of  $\Sigma$ is locally minimizing. Furthermore, we may define the $L_f$-index of $\Sigma$ analogous to defining the (Morse) index of  the operator $\Delta+q$  (cf \cite{F}).
  Consider the Dirichlet eigenvalue problems of $L_f$ on a compact domain $\Omega\subset \Sigma$: 
 $$L_fu+\lambda u=0, \quad u\in\Omega;  \quad u|_{\partial \Omega}=0.$$
 
\noindent  It is known that the $L^2(e^{-f}d\sigma)$-spectrum of the operator $L_f$ on $\Omega$ is discrete and the set of all (Dirichlet) eigenvalues, counted with multiplicity,  is an increasing sequence $$\lambda_1(\Omega)< \lambda_2(\Omega)\leq\cdots$$ with $\lambda_i(\Omega)\to \infty$ as $i\to \infty$. 

The $L_f$-index of $\Omega$, denoted by $L_f$-ind$(\Omega)$, is defined as the number of negative  Dirichlet eigenvalues of $L_f$ on $\Omega$ counted with multiplicity.
 The variational characterization of eigenvalues implies that if $\Omega_1\subset \Omega_2$, then $L_f$-ind$(\Omega_1)\leq$ $ L_f$-ind$(\Omega_2)$. Hence one may define 
 the  $L_{f}$-index of $\Sigma$ as follows.
 \begin{definition} \label{def-index} The $L_{f}$-index of $\Sigma$, denoted by  $L_f$-ind$(\Sigma)$,  is defined to be the  supremum  over  compact domains of $\Sigma$ of   $L_f$-index of compact domain, that is,
  $L_f$-ind$(\Sigma)=\displaystyle\sup_{\Omega\subset\subset\Sigma}$$L_f$-ind$(\Omega)$.
 \end{definition} 
 \noindent  By Definition \ref{def-index},  $L_f$-ind$(\Sigma)=0$ if and only if $\Sigma$ is $ L_f$-stable.
  
  If $\Sigma$ is an $n$-dimensional hypersurface in a complete manifold $M^{n+1}$,  $\Sigma$ is said to have polynomial volume growth  if, for a $p\in M$ fixed, there exist constants $C$ and $d$ so that for all $r\geq 1$,
\begin{equation} \text{Vol} ({B}^M_r( p)\cap \Sigma)\leq Cr^d \label{P-1-1},
\end{equation}
where ${B}^M_r( p)$ is the extrinsic ball of radius $r$ centered at $p$,  $ \text{Vol}  ({B}^M_r( p)\cap \Sigma)$ denotes the volume of ${B}^M_r( p)\cap \Sigma$.
When $d=n$ in (\ref{P-1-1}), $\Sigma$ is said to be of  Euclidean volume growth.

Now we recall the definition and some related facts of gradient Ricci solitons (cf Chapter 4 of \cite{CLN}, or \cite{C}). A quadruple  $(M, \overline{g}, f, \rho)$ is called a gradient Ricci soliton if it satisfies that 
$$\overline{Ric}+\overline{\nabla}^2f=\frac{\rho}{2}\bar{g},$$
where $\rho$ is a constant. If $\rho = 0$,  $\rho> 0$,  or $\rho < 0$, the gradient Ricci soliton $(M, \overline{g}, f, \rho)$ is called  steady,  shrinking or expanding   respectively.  

It is known that  a gradient Ricci soliton $(M, \overline{g}, f, \rho)$ has an associated time-dependent version, i.e.,  a gradient Ricci soliton solution to the Ricci flow. By abuse of notations,  a gradient Ricci soliton solution to the Ricci flow is often called gradient Ricci soliton if there is  no confusion. 

 Cylinder shrinking solitons of type $\mathbb{S}^n\times \mathbb{R}$, $n\geq 2$:  Let  $\mathbb{S}^n(\sqrt{2(n-1)})$ denote the  round sphere in the Euclidean space $\mathbb{R}^{n+1}$ of radius $\sqrt{2(n-1)}$ centered at the origin. Consider  the triple $(\mathbb{S}^{n}(\sqrt{2(n-1)})\times \mathbb{R}, \bar{g}(t), f(t))$, $t\in (-\infty,0)$,  with  the metric 
$$\overline{g}(t)=2(n-1)|t|ds_{n}^2+dh^2, $$
and $$f(t)(x,h)=\frac{h^2}{4|t|}, \quad (x,h)\in \mathbb{S}^{n}(\sqrt{2(n-1)})\times\mathbb{R}.$$
where $ds_n^2$ and $dh^2$ denote the canonical metrics on the unit round sphere in $\mathbb{R}^{n+1}$ and $\mathbb{R}$ respectively. One may verify that
 $$\overline{Ric}_{f(t)}=\overline{Ric}+\overline{\nabla}^2(f(t))=\frac1{2|t|}\bar{g}(t).$$
 Hence for each $t$,  $(\mathbb{S}^n(\sqrt{2(n-1)})\times\mathbb{R}, \overline{g}(t), f(t))$ is a gradient shrinking Ricci soliton and they are the examples of cylinder shrinking solitons of type $\mathbb{S}^n\times \mathbb{R}$.  Moreover, it is known that $(\mathbb{S}^n(\sqrt{2(n-1)})\times\mathbb{R}, \overline{g}(t), f(t))$, $t\in (-\infty,0)$, is the gradient shrinking Ricci soliton solution to the Ricci flow $\frac{\partial }{\partial t}\bar{g}=-2\overline{Ric}$.
 
In this paper, we denote by  $(\mathbb{S}^{n}(\sqrt{2(n-1)})\times \mathbb{R}, \bar{g}, f)$  the  cylinder shrinking soliton $(\mathbb{S}^n(\sqrt{2(n-1)})\times\mathbb{R}, \overline{g}(-1), f(-1))$. To study the $f$-minimal hypersurfaces in a cylinder shrinking soliton of type $\mathbb{S}^n\times \mathbb{R}$,  it suffices to  consider the ambient  is $(\mathbb{S}^{n}(\sqrt{2(n-1)})\times \mathbb{R}, \bar{g}, f)$.

We also need   the following result:
\begin{prop}\label{equiv}(\cite{CMZ2}, Corollary 1) Let $(M^{m},\bar{g},f)$ be a complete shrinking gradient Ricci soliton with $\overline{\Ric}_{f}= \frac12\bar{g}$. Assume that   $f$ is  a convex 
function, i.e., $\overline{\nabla}^2f\geq 0$,  and satisfies  $|\overline{\nabla}f|^{2}\leq  f$.  If $\Sigma$ is an immersed complete $f$-minimal submanifold in $M$, then  for  $\Sigma$ the properness of immersion, polynomial volume growth, and finite weighted volume are equivalent.
\end{prop}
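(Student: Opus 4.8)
The plan is to prove the three conditions equivalent by closing the cycle
\[
(\text{proper})\ \Longrightarrow\ (\text{polynomial volume growth})\ \Longrightarrow\ (\text{finite weighted volume})\ \Longrightarrow\ (\text{proper}).
\]
Three inputs will be used repeatedly. The first is the quadratic growth estimate of Cao and Zhou for the potential of a complete normalized shrinking soliton: there are constants $c_1,c_2>0$ such that $\tfrac14(\bar r(x)-c_1)^2\le f(x)\le\tfrac14(\bar r(x)+c_2)^2$, where $\bar r$ is the $\bar g$-distance in $M$ from a fixed point; the upper bound is immediate from the hypothesis $|\overline{\nabla}f|^2\le f$, since that makes $2\sqrt f$ a $1$-Lipschitz function. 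The second is the drifted-Laplacian identity for the restriction $f|_\Sigma$: combining $f$-minimality, in the form $\vec H=(\overline{\nabla}f)^\perp$, with the soliton equation $\overline{\nabla}^2 f=\tfrac12\bar g-\overline{\Ric}$ gives
\[
\Delta_f f=\operatorname{tr}_{T\Sigma}\overline{\nabla}^2 f+2|\vec H|^2-|\overline{\nabla}f|^{2}.
\]
Since $f$ is convex, every eigenvalue of $\overline{\nabla}^2 f$ is nonnegative, so the partial trace is squeezed by $0\le\operatorname{tr}_{T\Sigma}\overline{\nabla}^2 f\le\overline{\Delta}f=\tfrac m2-\overline R\le\tfrac m2$, the last step using the scalar-curvature bound $\overline R\ge0$ of B.-L.~Chen; together with $|\vec H|^2\le|\overline{\nabla}f|^2\le f$ this yields the one-sided bound $\Delta_f f\le f+\tfrac m2$. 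The third input is the conformal reformulation: $\Sigma$ is $f$-minimal in $(M,\bar g)$ precisely when it is minimal in $(M,\tilde g)$, $\tilde g=e^{-2f/n}\bar g$, which makes the monotonicity formula for minimal submanifolds available.

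For $(\text{polynomial})\Rightarrow(\text{finite weighted})$ I would slice $\Sigma$ by the extrinsic distance. Writing $\Sigma_k=\{k\le\bar r<k+1\}\cap\Sigma$, the polynomial bound gives $\operatorname{Vol}(\Sigma_k)\le C(k+1)^d$, while the lower Cao--Zhou bound gives $e^{-f}\le e^{-c\,k^2}$ on $\Sigma_k$; hence $\int_\Sigma e^{-f}\,d\sigma\le\sum_{k\ge0}C(k+1)^d e^{-c\,k^2}<\infty$, since Gaussian decay beats polynomial growth.

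For $(\text{finite weighted})\Rightarrow(\text{proper})$ I argue by contradiction. If $\iota\colon\Sigma\to M$ is not proper, there is a point $q\in M$ and a sequence $p_i\to\infty$ in $\Sigma$ with $\iota(p_i)\to q$. Fix a bounded neighborhood $K$ of $q$; there $f$ is bounded, so $e^{-f}\ge\delta>0$ on $K$, and it suffices to show $\operatorname{Vol}(\iota^{-1}(K))=\infty$. Passing to the metric $\tilde g$, in which $\Sigma$ is minimal, the monotonicity formula gives a positive lower density bound, so a definite amount of $\tilde g$-volume accumulates near $q$ from each sheet. Choosing a subsequence of the $p_i$ whose small intrinsic balls are pairwise disjoint and whose images all lie in $K$, monotonicity bounds the volume of each such ball below by a fixed constant, so the total volume in $K$ is infinite; hence the weighted volume is infinite, a contradiction.

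The genuine difficulty is $(\text{proper})\Rightarrow(\text{polynomial, in fact Euclidean, volume growth})$, which carries the analytic content. Here I set $D(t)=\{f\le t\}\cap\Sigma$ and $\phi(t)=\int_{D(t)}e^{-f}\,d\sigma$. Properness makes every $D(t)$ compact, so the divergence theorem in the measure $e^{-f}d\sigma$ is legitimate and converts $\int_{D(t)}\Delta_f f\,e^{-f}$ into the boundary flux $e^{-t}\int_{\{f=t\}}|\nabla^\Sigma f|\,d\mathcal H^{n-1}$; coupling this with the coarea formula for $f|_\Sigma$ and the bound $\Delta_f f\le f+\tfrac m2$ produces a first-order differential inequality for $\phi$ that controls its growth. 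Feeding this back through the two-sided estimate $f\asymp\tfrac14\bar r^2$ converts the weighted control into the unweighted bound $\operatorname{Vol}(\{\bar r\le r\}\cap\Sigma)\le C r^n$. The main obstacle is precisely making this chain close with the right constant: one must verify that the boundary terms have a usable sign, handle the lower-order contributions $2|\vec H|^2-|\overline{\nabla}f|^2$ so that the differential inequality integrates to a polynomial bound, and ensure that the passage between the level sets of $f$ and the extrinsic balls $\{\bar r\le r\}$ is uniform, for which the Cao--Zhou comparison is essential. Properness enters only through the compactness of the $D(t)$, which is what legitimizes every integration by parts in the argument.
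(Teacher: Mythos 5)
First, a point of comparison: the paper itself gives no proof of this proposition --- it is quoted verbatim from \cite{CMZ2}, Corollary 1 --- so your attempt can only be measured against the cited argument, and your three-step cycle (proper $\Rightarrow$ polynomial $\Rightarrow$ finite weighted volume $\Rightarrow$ proper) is indeed the architecture used there and in the self-shrinker antecedents (Ding--Xin, Cheng--Zhou). Two of your implications are essentially sound. Polynomial $\Rightarrow$ finite weighted volume is correct as written: the Cao--Zhou lower bound $f\geq\frac14(\bar r-c_1)^2$ makes the dyadic-shell sum converge, exactly as the paper itself does in the display \eqref{ine-tk}. Finite weighted $\Rightarrow$ proper via non-collapsing is also the standard route, with two repairs: for a $k$-dimensional submanifold the conformal metric must be $\tilde g=e^{-2f/k}\bar g$ (your exponent $2f/n$ is the hypersurface case), and the monotonicity formula must be applied to pairwise disjoint \emph{intrinsic} pieces whose boundaries lie outside the extrinsic ball, using local bounded geometry of $(M,\tilde g)$ near the accumulation point $q$; both are routine.

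The genuine gap is in proper $\Rightarrow$ polynomial growth, which you correctly identify as the analytic core but do not actually close, and the tool you propose cannot close it. Your one-sided bound $\Delta_f f\le f+\frac m2$ controls the flux $e^{-t}\int_{\{f=t\}}|\nabla f|\,d\mathcal H^{n-1}$ from \emph{above}, whereas the coarea derivative you need to dominate is $\phi'(t)=e^{-t}\int_{\{f=t\}}|\nabla f|^{-1}\,d\mathcal H^{n-1}$; passing between the two requires a \emph{lower} bound on $|\nabla f|$ along level sets, which fails at critical values, so no usable first-order inequality for $\phi$ results. The known mechanism requires both sides of your convexity squeeze $0\le\operatorname{tr}_{T\Sigma}\overline{\nabla}^2f\le\frac m2$, the exact splitting $|\nabla f|^2=|\overline{\nabla}f|^2-|\vec H|^2$ on $\{f=t\}$, and the pairing of $V(t)=\operatorname{Vol}(D(t))$ with $\eta(t)=\int_{D(t)}|\vec H|^2\,d\sigma$, producing a Cao--Zhou-type relation of the shape $tV'(t)-\eta'(t)\le\frac m2V(t)-\eta(t)$ that integrates against the factor $t^{-m/2}$; none of this is recoverable from your $\phi$-inequality. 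Compounding this, your starting identity carries a sign error: with the paper's conventions $f$-minimality reads $\vec H=-(\overline{\nabla}f)^{\perp}$, so
\begin{equation*}
\Delta_f f=\operatorname{tr}_{T\Sigma}\overline{\nabla}^2f-|\overline{\nabla}f|^{2},
\end{equation*}
with no $2|\vec H|^2$ term --- compare the paper's own computation \eqref{eqs9}, where $\Delta_f f=\frac12(1-\alpha^2)-|\overline{\nabla}f|^2$ in the cylinder case. Your spurious $+2|\vec H|^2$ happens to be harmless for the crude upper bound, but it would corrupt exactly the $|\vec H|^2$-bookkeeping on which the closable differential inequality depends, so the step you flag as ``the main obstacle'' would fail as set up, not merely remain unfinished.
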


Throughout this paper, we assume that $f$-minimal hypersurfaces are orientable.


\section{$L_f$-index of  a weighted manifold}\label{sec-weight}

In this section,  suppose that $(\Sigma, g, e^{-f}d\sigma)$ is  a complete smooth metric measure space. Here $\Sigma$  is not necessarily a hypersurface. Let $q(x)$ be a continuous function on $\Sigma$ and consider  the  operator
$L_f=\Delta_f+q(x)$ on $\Sigma$.  Recall that the bottom of the spectrum of the operator $L_f$, denoted by $\mu_1(L_f)$, satisfies that
$$\mu_{1}(L_f)=\inf_{ u\not\equiv 0, u\in C_0^{\infty}(\Sigma)}\frac{\displaystyle\int_{\Sigma}(|\nabla u|^{2}-qu^2)e^{-f}d\sigma}{\displaystyle\int_{\Sigma}u^{2}e^{-f}d\sigma}.$$
Just substitute  $q(x)$ for $|A|^2+\overline{Ric}_f(\nu,\nu)$ in   Section \ref{sec-notation}, one can similarly define
the $L_f$-index of $\Sigma$, denoted by $L_{f}$-ind$(\Sigma)$,  to be the  supremum over compact domains of $\Sigma$ of the number of negative  (Dirichlet) eigenvalues of $L_f$, In \cite{F}, Fischer-Colbrie showed that if a complete minimal hypersurface has finite induex, then it is stable outside of a compact set and there is a positive function $u$ on it so that  $Ju=(\Delta+|A|^2+\overline{Ric}(\nu,\nu))u=0$ outside of the compact set. Moreover, she proved 
an equivalent statement of finite index of a complete minimal hypersurface (\cite{F}, Proposition 2) and also proved that if the index of a complete minimal hypersurface is finite, then the bottom of the spectrum of stability operator $J$ is finite (see (4) in \cite{F}). An analogous argument gives the following weighted version for $L_f$: 
\begin{prop} \label{mu1}  The following  are equivalent:

(i) $\Sigma$ has finite $L_f$-index;

(ii) There exists a finite dimensional subspace $W$ of the weighted space $L^2(e^{-f}d\sigma)$ having an orthonormal basis $\psi_1,\ldots,\psi_k$ consisting of eigenfunctions with eigenvalues $\lambda_1,\ldots,\lambda_k$ respectively. Each $\lambda_i$ is negative and  for $\phi\in C_0^{\infty}(\Sigma)\cap W^{\perp}$, $Q(\phi,\phi)\geq 0$.

 Moreover if the $L_f$-ind$(\Sigma)<\infty$, then $L_f$-ind$(\Sigma)=dim W$ and the bottom $\mu_1$ of the spectrum of $L_f$ is  finite. Furthermore if $1\leq L_f$-ind$(\Sigma)<\infty$, $\mu_1$ is the least  negative $L^2(e^{-f}d\sigma)$ eigenvalue.
\end{prop}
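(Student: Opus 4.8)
The plan is to adapt Fischer-Colbrie's argument for the classical stability operator $J=\Delta+|A|^2+\overline{\text{Ric}}(\nu,\nu)$ to the weighted, drifted setting, exploiting that $\Delta_f$ is self-adjoint in $L^2(e^{-f}d\sigma)$. The key structural fact is that finite $L_f$-index means the negative part of the spectrum of $L_f$ is exhausted by a finite-dimensional space of $L^2(e^{-f}d\sigma)$-eigenfunctions. First I would establish $(i)\Rightarrow(ii)$: assuming $L_f\text{-ind}(\Sigma)=m<\infty$, I would take an exhaustion $\Omega_1\subset\subset\Omega_2\subset\subset\cdots$ of $\Sigma$ by compact domains and consider the Dirichlet eigenvalues $\lambda_i(\Omega_j)$. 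By the monotonicity $L_f\text{-ind}(\Omega_1)\leq L_f\text{-ind}(\Omega_2)$ noted in Section \ref{sec-notation} and the definition of $L_f\text{-ind}(\Sigma)$ as the supremum, the number of negative Dirichlet eigenvalues stabilizes at $m$ for large $j$. The first $m$ Dirichlet eigenvalues $\lambda_i(\Omega_j)$ are monotone nonincreasing in $j$ and bounded below, so they converge; the corresponding normalized eigenfunctions, extended by zero, are supported in a fixed neighborhood behavior that, via elliptic estimates and the weighted Rellich-type compactness, yield limit functions $\psi_1,\ldots,\psi_m\in L^2(e^{-f}d\sigma)$ that are genuine $L^2$-eigenfunctions of $L_f$ with negative eigenvalues $\lambda_1,\ldots,\lambda_m<0$. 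Setting $W=\text{span}\{\psi_1,\ldots,\psi_m\}$, orthonormality is arranged by Gram-Schmidt in the weighted inner product.

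Next I would verify the orthogonal-complement stability in $(ii)$, namely that $Q(\phi,\phi)=B_f(\phi,\phi)\geq 0$ for all $\phi\in C_0^\infty(\Sigma)\cap W^\perp$. The argument is by contradiction: if some such $\phi$ had $Q(\phi,\phi)<0$, then $\phi$ together with $\psi_1,\ldots,\psi_m$ would span an $(m+1)$-dimensional space on which $Q$ is negative definite (using that the $\psi_i$ diagonalize $Q$ with negative eigenvalues and $\phi\perp W$, so the cross terms vanish). Since $\phi$ is compactly supported in some $\Omega_j$, this would force $L_f\text{-ind}(\Omega_j)\geq m+1$, contradicting the stabilized count. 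Hence $Q\geq 0$ on $C_0^\infty(\Sigma)\cap W^\perp$, and the same dimension-counting shows $L_f\text{-ind}(\Sigma)=\dim W=m$.

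For the converse $(ii)\Rightarrow(i)$ and the final assertions, I would observe that if such a finite-dimensional $W$ exists with $Q\geq 0$ on $C_0^\infty(\Sigma)\cap W^\perp$, then for any compact domain $\Omega$ the negative eigenspace of the Dirichlet problem on $\Omega$ injects into $W$ (any $L^2$-normalized eigenfunction with negative eigenvalue, when projected off $W$, would produce a compactly supported element of $W^\perp$ with $Q<0$ unless it already lies in $W$), giving $L_f\text{-ind}(\Omega)\leq\dim W$ uniformly, hence finite index. For the spectral bottom: since the Rayleigh quotient defining $\mu_1(L_f)$ is bounded below (the negative directions are confined to the finite-dimensional $W$, and on $W^\perp$ the quotient is nonnegative), $\mu_1$ is finite. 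When $1\leq L_f\text{-ind}(\Sigma)<\infty$, the infimum in the variational formula is attained by the lowest eigenfunction $\psi_1$, which lies in $W$ and has the most negative eigenvalue $\lambda_1<0$; thus $\mu_1=\lambda_1$ is the least negative $L^2(e^{-f}d\sigma)$-eigenvalue.

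The main obstacle I anticipate is the convergence and regularity step in $(i)\Rightarrow(ii)$: extracting honest $L^2(e^{-f}d\sigma)$-eigenfunctions as limits of Dirichlet eigenfunctions on an exhaustion. One must control the behavior of the weight $e^{-f}$ and ensure the limit functions are genuinely square-integrable against $e^{-f}d\sigma$ rather than escaping to infinity; this requires weighted elliptic estimates and the observation, from the soliton structure, that $f$ grows so that $e^{-f}d\sigma$ is a finite-type measure controlling the tails. Since $\Delta_f$ is formally self-adjoint on $L^2(e^{-f}d\sigma)$, the weighted Bochner/integration-by-parts machinery of Section \ref{sec-notation} applies verbatim, and Fischer-Colbrie's compactness argument transfers once this weighted $L^2$-framework is in place. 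I would cite \cite{F} for the structure of the argument and emphasize only the modifications forced by the drift term $-\langle\nabla f,\nabla\cdot\rangle$ and the weighted measure.
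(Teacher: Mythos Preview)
Your overall strategy matches the paper's: the paper gives no proof of Proposition~\ref{mu1} and simply asserts that Fischer-Colbrie's argument in \cite{F} carries over verbatim to the weighted setting, with $\Delta$ replaced by $\Delta_f$ and $d\sigma$ by $e^{-f}d\sigma$. Your outline of (i)$\Leftrightarrow$(ii) via an exhaustion and limiting Dirichlet eigendata is the right skeleton.

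There is, however, a genuine gap in how you propose to resolve the main obstacle. You assert without justification that the first $m$ Dirichlet eigenvalues $\lambda_i(\Omega_j)$ are ``bounded below,'' and you later propose to prevent the eigenfunctions from escaping to infinity by invoking ``the soliton structure, that $f$ grows so that $e^{-f}d\sigma$ is a finite-type measure.'' Neither is available here: Proposition~\ref{mu1} is stated in Section~\ref{sec-weight} for an \emph{arbitrary} complete smooth metric measure space with an arbitrary continuous potential $q$, with no soliton hypothesis and no finite-weighted-volume assumption. The mechanism that actually delivers both the lower bound on $\lambda_i(\Omega_j)$ and the non-escape of eigenfunctions is precisely the step the paper highlights in the sentence preceding the proposition and that you omitted: finite $L_f$-index forces $L_f$-stability outside some compact set $K$, and stability on $\Sigma\setminus K$ yields a positive solution $u>0$ of $L_fu=0$ there. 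This $u$ is then used (via the substitution $\phi\mapsto u\phi$, equivalently the log-trick behind Proposition~\ref{logf}) to bound the Rayleigh quotient from below on the exterior; that is what makes the $\lambda_i(\Omega_j)$ converge to finite limits, keeps the $L^2(e^{-f}d\sigma)$-mass of the normalized eigenfunctions from drifting off, and ultimately gives $\mu_1>-\infty$. Your orthogonal-complement argument also needs this convergence, since the $\psi_i$ are not compactly supported and the contradiction must be run through the Dirichlet eigenfunctions on $\Omega_j$ and their convergence to the $\psi_i$. Once you insert the ``stable outside a compact set plus positive solution'' step, the rest of your outline goes through exactly as in \cite{F}; no special growth of $f$ is required.
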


 Denote by  $W^{1,2}(e^{-f}d\sigma)$ the weighted Sobolev space which is the set of the functions $u$ on $ \Sigma$ satisfying that $\int_{\Sigma}(u^2+|\nabla u|^2)e^{-f}d\sigma<\infty$.  $W^{1,2}(e^{-f}d\sigma)$ has the norm:
$$\|u\|_{W^{1,2}(e^{-f}d\sigma)}:=\left(\int_{\Sigma}(u^2+|\nabla u|^2)e^{-f}d\sigma\right)^{\frac12}.$$
Through a similar proof, we can extend Lemmas 9.15 and 9.25 in \cite{CM3} for the operator $L=\Delta-\frac 12\langle x, \nabla\cdot\rangle+|A|^2+\frac 12$ on hypersurfaces in $\mathbb{R}^{n+1}$ to  the operator $L_f$. 
\begin{prop}\label{eigenfunction1} If $\mu_1(L_f)\neq -\infty$, then there exists a positive $C^2$ function $u$ on $\Sigma$ with $L_fu=-\mu_1(L_f) u$. Moreover, if $w$ is in the the weighted $W^{1,2}(e^{-f}d\sigma)$ space and $L_fw=-\mu_1(L_f) w$, then $w=Cu$ for some $C\in\mathbb{R}$.
\end{prop}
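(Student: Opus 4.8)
The plan is to construct the ground state by a compact exhaustion and then to prove uniqueness by a ground-state (Doob) transform. Write $\mu_1:=\mu_1(L_f)$ and assume $\mu_1>-\infty$ throughout.

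\textbf{Existence.} Exhaust $\Sigma$ by smooth domains $\Omega_1\subset\subset\Omega_2\subset\subset\cdots$ with $\bigcup_i\Omega_i=\Sigma$. Since $-L_f$ is self-adjoint in $L^2(e^{-f}d\sigma)$, the Dirichlet problem on each $\Omega_i$ has a least eigenvalue $\lambda_1(\Omega_i)$ whose eigenfunction $u_i$ may be taken positive in the interior (the first Dirichlet eigenfunction of a self-adjoint second-order elliptic operator does not change sign, and the strong maximum principle gives strict positivity); normalize $u_i(p_0)=1$ at a fixed $p_0\in\Omega_1$. Domain monotonicity gives $\lambda_1(\Omega_i)\ge\mu_1$ and $\lambda_1(\Omega_i)\downarrow\mu_1$. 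On any compact $K$, once $K\subset\Omega_i$ the functions $u_i$ solve $L_fu_i=-\lambda_1(\Omega_i)u_i$, a uniformly elliptic equation with continuous coefficients; the Harnack inequality for $L_f$ together with $u_i(p_0)=1$ gives uniform two-sided bounds on $K$, and interior Schauder estimates give uniform $C^{2,\alpha}(K)$ bounds. A diagonal argument then extracts a subsequence converging in $C^2_{\mathrm{loc}}(\Sigma)$ to some $u\ge 0$ with $L_fu=-\mu_1 u$ and $u(p_0)=1$; since $u\not\equiv 0$, Harnack forces $u>0$ everywhere.

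\textbf{Uniqueness.} Let $w\in W^{1,2}(e^{-f}d\sigma)$ satisfy $L_fw=-\mu_1 w$; elliptic regularity makes $w\in C^2$, and since $u>0$ we may set $v:=w/u\in C^2(\Sigma)$. Using $(\Delta_f+q+\mu_1)u=0$ one checks the ground-state identity
\[
(\Delta_f+q+\mu_1)(uv)=u\,\Delta_f v+2\langle\nabla u,\nabla v\rangle,
\]
so that $L_fw=-\mu_1 w$ is equivalent to $\operatorname{div}\!\big(u^2e^{-f}\nabla v\big)=0$. Pairing with $\eta^2 v$ for a cutoff $\eta$ and integrating by parts (no boundary terms, as $\eta$ is compactly supported) yields, after Cauchy--Schwarz, the Caccioppoli estimate
\[
\int_\Sigma \eta^2 u^2|\nabla v|^2\,e^{-f}d\sigma\le 4\int_\Sigma |\nabla\eta|^2\,u^2v^2\,e^{-f}d\sigma=4\int_\Sigma |\nabla\eta|^2\,w^2\,e^{-f}d\sigma.
\]
Taking $\eta=1$ on the geodesic ball $B_R(p_0)$, supported in $B_{2R}(p_0)$, with $|\nabla\eta|\le C/R$, the right-hand side is at most $(C/R^2)\int_\Sigma w^2e^{-f}d\sigma\to 0$ as $R\to\infty$ because $w\in L^2(e^{-f}d\sigma)$. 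Letting $R\to\infty$ and using monotone convergence on the left gives $\int_\Sigma u^2|\nabla v|^2 e^{-f}d\sigma=0$, hence $\nabla v\equiv 0$ and $w=Cu$.

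\textbf{Main obstacle.} I expect two delicate points. In the existence step the whole argument hinges on a Harnack inequality for the drift operator $L_f$ with constants uniform on compact sets, used both to rule out $u\equiv 0$ and to upgrade $u\ge 0$ to $u>0$; this is precisely where $\mu_1>-\infty$ is needed, since otherwise $\lambda_1(\Omega_i)\to-\infty$ and the zeroth-order coefficient of the limiting equation degenerates. In the uniqueness step the only place the weighted hypothesis enters is the decay of the Caccioppoli right-hand side: one must ensure that the cutoff contribution is controlled purely by $\|w\|_{L^2(e^{-f}d\sigma)}$, so that no term survives at infinity. This is exactly what the finiteness of the weighted $W^{1,2}$-norm guarantees, and verifying that $v=w/u$ remains admissible in these integrals (despite $u$ possibly being unbounded) is the technical heart of the second assertion.
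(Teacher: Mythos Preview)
Your proof is correct and follows essentially the same route as the one the paper defers to: the paper gives no argument of its own for this proposition but simply states that Lemmas~9.15 and~9.25 of Colding--Minicozzi \cite{CM3} carry over verbatim to the drifted operator $L_f$. Your existence step (compact exhaustion, Harnack, Schauder, diagonal extraction) is exactly the content of their Lemma~9.15, and your uniqueness step (ground-state transform $v=w/u$ leading to $\operatorname{div}(u^{2}e^{-f}\nabla v)=0$, then a cutoff Caccioppoli estimate controlled by $\|w\|_{L^{2}(e^{-f}d\sigma)}$) is the argument behind their Lemma~9.25.
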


\begin{prop} \label{logf} Suppose that $h$ is a $C^2$ function with $L_fh=-\mu h$, $\mu\in\mathbb{R}$. If $h>0$ and $\phi$ is in  $W^{1,2}(e^{-f}d\sigma)$, then 
\begin{equation}
\label{A-est} \int_{\Sigma}\phi^2(2q+|\log h|^2)e^{-f}d\sigma\leq \int_{\Sigma}(4|\nabla\phi|^2-2\mu\phi^2)e^{-f}d\sigma.
\end{equation}
\end{prop}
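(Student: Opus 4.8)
The plan is to reduce the inequality to a pointwise elliptic identity for the logarithm $w:=\log h$, which is well defined since $h>0$. First I would compute $\Delta_f w$ in terms of $\Delta_f h$. Using $\nabla w=\nabla h/h$ one has
\begin{equation*}
\Delta_f w=\frac{\Delta_f h}{h}-\frac{|\nabla h|^2}{h^2}=\frac{\Delta_f h}{h}-|\nabla w|^2,
\end{equation*}
and since $L_f h=\Delta_f h+qh=-\mu h$ gives $\Delta_f h=-(q+\mu)h$, this yields the key identity
\begin{equation*}
\Delta_f w=-(q+\mu)-|\nabla w|^2,\qquad\text{equivalently}\qquad q+\mu=-\Delta_f w-|\nabla w|^2.
\end{equation*}
This is the weighted analogue of the Riccati-type identity underlying Lemma 9.25 of \cite{CM3}, and it is the source of both the factor $4$ and the gradient term $|\nabla\log h|^2$ in \eqref{A-est} (here I read $|\log h|^2$ in the statement as $|\nabla\log h|^2$).

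Next, for $\phi\in C_0^\infty(\Sigma)$ I would multiply the identity by $\phi^2$, integrate against $e^{-f}d\sigma$, and integrate the $\Delta_f w$ term by parts using the self-adjointness formula recorded in Section \ref{sec-notation}, $\int_\Sigma \phi^2\Delta_f w\,e^{-f}d\sigma=-2\int_\Sigma \phi\langle\nabla\phi,\nabla w\rangle e^{-f}d\sigma$. This gives
\begin{equation*}
\int_\Sigma \phi^2(q+\mu)e^{-f}d\sigma=2\int_\Sigma \phi\langle\nabla\phi,\nabla w\rangle e^{-f}d\sigma-\int_\Sigma \phi^2|\nabla w|^2 e^{-f}d\sigma.
\end{equation*}
Doubling this and applying the Cauchy--Schwarz and Young inequalities in the form $4\phi\langle\nabla\phi,\nabla w\rangle\le 4|\nabla\phi|^2+\phi^2|\nabla w|^2$, the two copies of $\phi^2|\nabla w|^2$ cancel exactly, leaving
\begin{equation*}
\int_\Sigma \phi^2\bigl(2q+|\nabla w|^2\bigr)e^{-f}d\sigma\le\int_\Sigma\bigl(4|\nabla\phi|^2-2\mu\phi^2\bigr)e^{-f}d\sigma,
\end{equation*}
which is \eqref{A-est} for compactly supported $\phi$.

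The step I expect to be the main obstacle is passing from $\phi\in C_0^\infty(\Sigma)$ to a general $\phi\in W^{1,2}(e^{-f}d\sigma)$, because $|\nabla\log h|^2$ is not known to be $e^{-f}$-integrable a priori, so one cannot simply invoke density. I would handle this by a cutoff-and-truncation argument: fix an exhaustion by compactly supported cutoffs $\eta_j$ with $0\le\eta_j\le1$, $\eta_j\to1$ and $|\nabla\eta_j|\to0$ in the weighted $L^2$ sense, apply the inequality just proved to $\eta_j\phi$, and estimate the error terms coming from $|\nabla\eta_j|$ using that $\phi\in W^{1,2}(e^{-f}d\sigma)$. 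On the right-hand side the cross terms vanish in the limit, while on the left-hand side the integrand $\phi^2(2q+|\nabla w|^2)$ is controlled from below by a fixed $e^{-f}$-integrable function (from the $2q\phi^2$ part) and the remaining term $\eta_j^2\phi^2|\nabla w|^2$ is nonnegative; hence Fatou's lemma lets me pass to the limit and recover \eqref{A-est}, simultaneously showing that $\phi^2|\nabla\log h|^2$ is in fact integrable. This mirrors the truncation used in \cite{CM3} and is the only place where the weighted completeness of $\Sigma$ and the membership $\phi\in W^{1,2}(e^{-f}d\sigma)$ are genuinely needed.
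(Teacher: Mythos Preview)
Your proposal is correct and is exactly the approach the paper has in mind: the paper gives no independent proof but simply states that Propositions \ref{eigenfunction1} and \ref{logf} follow by extending Lemmas 9.15 and 9.25 of \cite{CM3} to $L_f$, and your argument is precisely that extension (the log-substitution $w=\log h$, the Riccati identity $\Delta_f w=-(q+\mu)-|\nabla w|^2$, integration by parts against $\phi^2 e^{-f}$, Young's inequality producing the factor $4$, and a cutoff/Fatou passage to general $\phi\in W^{1,2}(e^{-f}d\sigma)$). Your reading of $|\log h|^2$ as $|\nabla\log h|^2$ is also the intended one.
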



\section{Integral identities of  complete $f$-minimal hypersurfaces}\label{sec-integral}

From this section, we start to consider  complete  $f$-minimal hypersurfaces immersed in  the cylinder shrinking soliton  $(\mathbb{S}^{n}(\sqrt{2(n-1)})\times\mathbb{R},\bar{g}, f)$, $n\geq 2$ with $f(x,h)=\frac{h^2}{4}$,  $(x,h)\in \mathbb{S}^{n}(\sqrt{2(n-1)})\times\mathbb{R}$ and    the  metric 
\begin{equation*}
\overline{g}=g_{\mathbb{S}^{n}(\sqrt{2(n-1)})}+dh^{2}.
\end{equation*}
 
 $(\mathbb{S}^{n}(\sqrt{2(n-1)})\times\mathbb{R},\overline{g},f)$ associates to a smooth metric measure space $(\mathbb{S}^{n}(\sqrt{2(n-1)})\times\mathbb{R},\overline{g},e^{-f}d\mu)$.  It is known (cf \cite{CMZ3}) that
\begin{align}
&\overline{\nabla}f=\frac h2\frac{\partial }{\partial h},  \quad \overline{\nabla}^2f=\frac12\frac{\partial}{\partial h}\otimes\frac{\partial}{\partial h}\nonumber\\
&\overline{\textrm{Ric}}=\frac{1}{2}\overline{g}-\frac12\frac{\partial}{\partial h}\otimes\frac{\partial}{\partial h},\nonumber\\
\label{ric-f}
&\overline{\textrm{Ric}}_{f}=\frac12\overline{ g}.
\end{align}

  \noindent 
For an $f$-minimal hypersurface $\Sigma$ immersed  in $\mathbb{S}^{n}(\sqrt{2(n-1)})\times \mathbb{R}$,  
$$0=H_f=H-\frac h2\langle \frac{\partial}{\partial h},\nu\rangle=H-\frac{h}{2}\alpha,$$
 where $\alpha=\langle \frac{\partial}{\partial h},\nu\rangle$. Hence $\Sigma$ satisfies 
 \begin{equation}\label{eq-H}
 H=\frac{h\alpha}{2}.
 \end{equation}
 Under a local orthonormal frame $\{e_i\}_{i=1}^n$ on $\Sigma$,
 \begin{align*}
\nabla_{e_i}\alpha&=\langle\overline{\nabla}_{e_i}\nu,\frac{\partial}{\partial h}\rangle=\sum_{j=1}^na_{ij}\langle e_j,\frac{\partial}{\partial h}\rangle.
\end{align*}
\begin{equation}\label{ine-nabla}
|\nabla\alpha|^2=\displaystyle\sum_{i=1}^n|\nabla_{e_i}\alpha|^2\leq \displaystyle\sum_{i=1}^n\left(\displaystyle\sum_{j=1}^na_{ij}^2\right)\left(\displaystyle\sum_{j=1}^n\langle e_j,\frac{\partial}{\partial h}\rangle^2\right)\leq |A|^2.
\end{equation}
 The operator $L_f$ on $\Sigma$ is equal to
\begin{equation}L_f=\Delta-\frac{h}{2}\langle \left(\frac{\partial}{\partial h}\right)^{T},\nabla\cdot\rangle+|A|^2+\frac{1}{2}.
\end{equation}
Now we give some examples of $f$-minimal hypersurfaces in $(\mathbb{S}^{n}(\sqrt{2(n-1)})\times\mathbb{R},\bar{g}, f)$.

\begin{example}\label{ex-fmin-1} (\cite{CMZ3}) Lemma 1) The slice $\mathbb{S}^{n}(\sqrt{2(n-1)})\times \{0\}$ is $f$-minimal and totally geodesic. Furthermore a complete immersed $f$-minimal hypersurface  is   in a horizontal slice $\mathbb{S}^{n}(\sqrt{2(n-1)})\times\{h\}$,  $h\in \mathbb{R}$ fixed,  if and only if it is $ \mathbb{S}^{n}(\sqrt{2(n-1)})\times \{0\}$.
\end{example}

\begin{example}  \label{ex-fmin-2} Assume that $\Sigma_1$ is an immersed  hypersurface in $\mathbb{S}^{n}(\sqrt{2(n-1)})$. Then the product $\Sigma=\Sigma_1\times \mathbb{R}$ is $f$-minimal in $(\mathbb{S}^{n}(\sqrt{2(n-1)})\times\mathbb{R},\bar{g}, f)$  if and only if  $\Sigma_1$ is   minimal  in $\mathbb{S}^{n}(\sqrt{2(n-1)})$.
 Particularly, the totally geodesic hypersurface $\mathbb{S}^{n-1}(\sqrt{2(n-1)})\times \mathbb{R}$ is  $f$-minimal  in $\mathbb{S}^{n}(\sqrt{2(n-1)})\times \mathbb{R}$, where   the $(n-1)$-dimensional sphere $\mathbb{S}^{n-1}(\sqrt{2(n-1)})\subset\mathbb{S}^{n}(\sqrt{2(n-1)})$.

The reason is that:  the unit normal $\nu$ of $\Sigma$ is also the unit normal of $\Sigma_1$ and   $\nu\in T\mathbb{S}^n(\sqrt{2(n-1)})$. Then
$\langle\overline{\nabla}f,\nu\rangle=0$ and $\overline{\nabla}_{\frac{\partial}{\partial h}}\nu=0$.
Hence $H_f(x,h)=H(x,h)=H_{\Sigma_1}(x)$, $(x,h)\in \Sigma_1\times\mathbb{R}$.  This implies that $H_f=0$ on $\Sigma$ if and only if $\Sigma_1$ is minimal in $\mathbb{S}^{n}(\sqrt{2(n-1)})$.
\end{example}

\noindent Note that  $\overline{\nabla}^2f\geq 0$   and $|\overline{\nabla}f|^2=f=\frac{h^2}{4}$. Applying Proposition \ref{equiv},
    we get that 
    \begin{prop}\label{prop-1-cor} For  any  complete immersed $f$-minimal hypersurface $\Sigma$ in $(\mathbb{S}^{n}(\sqrt{2(n-1)})\times\mathbb{R},\bar{g}, f)$, 
 the properness of immersion, polynomial volume growth, and finite weighted volume are 
 equivalent.
 \end{prop}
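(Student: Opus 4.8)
The plan is to deduce this statement as an immediate corollary of Proposition \ref{equiv}, so that the whole task reduces to verifying that the ambient cylinder shrinking soliton $(\mathbb{S}^{n}(\sqrt{2(n-1)})\times\mathbb{R},\bar{g}, f)$ meets each of the three hypotheses required there: completeness together with the normalized soliton identity $\overline{\Ric}_{f}=\frac12\bar{g}$, convexity $\overline{\nabla}^2f\geq 0$, and the growth bound $|\overline{\nabla}f|^2\leq f$. Once these are checked, I would apply Proposition \ref{equiv} with $M=\mathbb{S}^{n}(\sqrt{2(n-1)})\times\mathbb{R}$ (so $m=n+1$), noting that an $f$-minimal hypersurface is in particular an $f$-minimal submanifold, and read off the desired equivalence of properness, polynomial volume growth, and finite weighted volume.

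For the verification itself, all the needed formulas are already recorded above. Completeness is clear, since $\mathbb{S}^{n}(\sqrt{2(n-1)})$ is compact and $\mathbb{R}$ is complete, so the product with the product metric $\bar{g}$ is complete. The soliton identity is exactly \eqref{ric-f}, namely $\overline{\Ric}_{f}=\frac12\bar{g}$, which matches the required normalization (a shrinking soliton with $\rho=1$). For the remaining two conditions I would use $\overline{\nabla}f=\frac h2\frac{\partial}{\partial h}$ and $\overline{\nabla}^2f=\frac12\frac{\partial}{\partial h}\otimes\frac{\partial}{\partial h}$: the latter is a nonnegative symmetric $2$-tensor, so $f$ is convex, and the former gives $|\overline{\nabla}f|^2=\frac{h^2}{4}=f$, so that $|\overline{\nabla}f|^2\leq f$ holds, here with equality.

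There is essentially no analytic obstacle in this argument: the entire substance lies in Proposition \ref{equiv}, which is quoted from \cite{CMZ2}. The only point that warrants care is the bookkeeping of the normalizations. One must confirm that the constant in $\overline{\Ric}_{f}=\frac12\bar{g}$ and the borderline bound $|\overline{\nabla}f|^2\leq f$ are both compatible with Proposition \ref{equiv}; it is worth emphasizing that the radius $\sqrt{2(n-1)}$ of the sphere is precisely the value that produces $\overline{\Ric}_{f}=\frac12\bar{g}$, so this choice of scale is exactly what makes the corollary available, and the bound $|\overline{\nabla}f|^2\leq f$ is saturated rather than strict.
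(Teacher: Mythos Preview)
Your proposal is correct and matches the paper's own argument essentially verbatim: the paper simply notes that $\overline{\nabla}^2f\geq 0$ and $|\overline{\nabla}f|^2=f=\frac{h^2}{4}$, and then applies Proposition~\ref{equiv}. Your write-up is in fact slightly more detailed, spelling out completeness of the product and the role of the radius $\sqrt{2(n-1)}$ in achieving the normalization $\overline{\Ric}_f=\frac12\bar g$.
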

 \begin{remark}  Proposition \ref{prop-1-cor} will be used frequently in the proofs of this paper without mentioned. For self-shrinkers in $\mathbb{R}^{n+1}$,  the statement that properly immersed self-shrinkers in $\mathbb{R}^{n+1}$ implies the Euclidean volume growth and finite weighted\ volume was proved in \cite{DX}.  The equivalence of properness of immersion, finite weighted volume, polynomial volume growth was proved in \cite{CZ1}.

 \end{remark}

A Simons' type equation and the other identities for $f$-minimal hypersurfaces immersed in $(\mathbb{S}^{n}(\sqrt{2(n-1)})\times\mathbb{R},\bar{g}, f)$ were derived in  \cite{CMZ3} as follows:
\begin{prop}\label{lem-1}(\cite{CMZ3}) Let $\Sigma$ be an $f$-minimal hypersurface immersed in $(\mathbb{S}^{n}(\sqrt{2(n-1)})\times\mathbb{R},\bar{g}, f)$. Then
\begin{align}
\label{delta-0} L_f\alpha&=\frac12\alpha,\\
\label{delta-1}\frac{1}{2}\Delta_f\alpha^{2}&=|\nabla\alpha|^{2}-|A|^{2}\alpha^{2},\\
\label{delta-2}\frac{1}{2}\Delta_fH^{2}&=|\nabla H|^{2}-(|A|^{2}+\frac12)H^{2}+\frac12\langle\nabla\alpha^{2},\nabla f\rangle,\\
\label{delta-3}\frac{1}{2}\Delta_f|A|^{2}&=|\nabla A|^{2}+|A|^{2}(\frac12-|A|^{2})-\frac{1}{n-1}(|\nabla\alpha|^{2}-\alpha^{2}|A|^{2})\\
&\quad -\frac{1}{n-1}(\alpha^{2}f-\langle\nabla\alpha^{2},\nabla f\rangle).\nonumber
\end{align}
\end{prop}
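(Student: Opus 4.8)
The plan is to treat the four identities in increasing order of difficulty, since \eqref{delta-1} and \eqref{delta-2} are essentially formal consequences of \eqref{delta-0} together with one auxiliary scalar computation, while \eqref{delta-0} rests on a Bochner-type identity for the normal component of a parallel ambient field, and only \eqref{delta-3} is a genuine Simons-type calculation.

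First I would establish the general fact that if $V$ is parallel on the ambient ($\overline{\nabla}V\equiv 0$) and $u=\langle V,\nu\rangle$ on an $f$-minimal hypersurface, then
$$\Delta_f u = \overline{\text{Ric}}_f(V^T,\nu) - |A|^2 u,$$
where $V^T=V-u\nu$. To get this I differentiate $u$ twice: from $\nabla_{e_i}u=\sum_j a_{ij}\langle V,e_j\rangle$ and the contracted Codazzi equation one obtains $\Delta u=\langle\nabla H,V^T\rangle+\overline{\text{Ric}}(V^T,\nu)-|A|^2u$; then the $f$-minimal equation $H=\langle\overline{\nabla}f,\nu\rangle$ gives $\langle\nabla H,V^T\rangle=\overline{\nabla}^2f(V^T,\nu)+\langle\nabla f,\nabla u\rangle$, so the drift term cancels and the displayed identity drops out with $\overline{\text{Ric}}_f=\overline{\text{Ric}}+\overline{\nabla}^2f$. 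Taking $V=\frac{\partial}{\partial h}$, so $u=\alpha$, and inserting $\overline{\text{Ric}}_f=\frac12\bar g$ from \eqref{ric-f} forces $\overline{\text{Ric}}_f(V^T,\nu)=\frac12\langle V^T,\nu\rangle=0$; hence $\Delta_f\alpha=-|A|^2\alpha$, and adding $|A|^2\alpha+\frac12\alpha$ yields \eqref{delta-0}.

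For \eqref{delta-1} I apply the product rule $\frac12\Delta_f u^2=u\,\Delta_f u+|\nabla u|^2$ with $u=\alpha$ and $\Delta_f\alpha=-|A|^2\alpha$. For \eqref{delta-2} I first record the auxiliary identity $\Delta_f h=-\frac{h}{2}$: the restriction formula $\Delta_\Sigma h=\overline{\Delta}h-\overline{\nabla}^2h(\nu,\nu)-H\langle\overline{\nabla}h,\nu\rangle=-H\alpha$ (since $\frac{\partial}{\partial h}$ is parallel) combined with $\langle\nabla f,\nabla h\rangle=\frac{h}{2}(1-\alpha^2)$ and $H=\frac{h\alpha}{2}$ gives it. Writing $H=\frac{h\alpha}{2}$ and expanding $\Delta_f(h\alpha)$ then produces $\Delta_f H=-H(|A|^2+\frac12)+\langle\nabla h,\nabla\alpha\rangle$; feeding this into $\frac12\Delta_f H^2=H\,\Delta_f H+|\nabla H|^2$ and noting $H\langle\nabla h,\nabla\alpha\rangle=\frac12\langle\nabla\alpha^2,\nabla f\rangle$ (because $\nabla f=\frac{h}{2}\nabla h$) gives \eqref{delta-2}.

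The substantive step is \eqref{delta-3}. Here I would start from the Simons-type Bochner formula $\frac12\Delta_f|A|^2=|\nabla A|^2+\sum_{i,j}a_{ij}\Delta_f a_{ij}$ and expand $\Delta_f a_{ij}$ by commuting covariant derivatives, invoking the ambient curvature tensor of $\mathbb{S}^{n}(\sqrt{2(n-1)})\times\mathbb{R}$, and substituting the $f$-minimal data. The ambient is the product of a space form of constant curvature $\frac{1}{2(n-1)}$ with a flat line, so its curvature and the derivatives of $H=\frac{h\alpha}{2}$ are fully explicit; the constant $\frac{1}{n-1}$ and the combinations $|\nabla\alpha|^2-\alpha^2|A|^2$ and $\alpha^2 f-\langle\nabla\alpha^2,\nabla f\rangle$ in the statement are exactly what the sphere curvature and the $\frac{\partial}{\partial h}$-direction contribute after contracting against $A$. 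I expect the main obstacle to be bookkeeping rather than ideas: isolating the curvature terms generated by the product structure (which is not a space form), separating the reaction term $|A|^2(\frac12-|A|^2)$ from the curvature corrections, and checking that the $\overline{\nabla}^2 f$ and drift contributions reorganize precisely into the $\frac{1}{n-1}$-weighted expressions. Since every geometric quantity of the ambient is pinned down by \eqref{ric-f} and the constant curvature of the sphere factor, this is a finite if lengthy contraction that requires no new input beyond the explicit Simons identity.
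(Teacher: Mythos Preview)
The paper does not supply its own proof of this proposition; it quotes the identities from \cite{CMZ3} and uses them as input. So there is no in-paper argument to compare against line by line.

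That said, your derivations of \eqref{delta-0}--\eqref{delta-2} are correct and self-contained: the Bochner-type identity $\Delta_f\langle V,\nu\rangle=\overline{\text{Ric}}_f(V^T,\nu)-|A|^2\langle V,\nu\rangle$ for a parallel ambient $V$ is exactly the right engine for \eqref{delta-0}, the product rule immediately gives \eqref{delta-1}, and your computation $\Delta_f h=-h/2$ together with $H=\frac{h\alpha}{2}$ and the rewriting $H\langle\nabla h,\nabla\alpha\rangle=\tfrac12\langle\nabla\alpha^2,\nabla f\rangle$ cleanly yields \eqref{delta-2}. These are precisely the kinds of short arguments one would expect \cite{CMZ3} to contain.

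For \eqref{delta-3} you give a strategy rather than a proof. The strategy is the standard one---compute $\Delta_f a_{ij}$ via the Simons/Codazzi machinery, contract against $a_{ij}$, and insert the explicit curvature of $\mathbb{S}^n(\sqrt{2(n-1)})\times\mathbb{R}$---and it is the correct route. But the actual work is in the bookkeeping you defer: because the ambient is a product and not a space form, the curvature tensor has the anisotropic form $\overline{R}_{ijkl}=\frac{1}{2(n-1)}(\delta_{ik}\delta_{jl}-\delta_{il}\delta_{jk})$ \emph{minus} the corresponding terms involving $\langle e_i,\partial_h\rangle$, and it is the contraction of these ``$\partial_h$-correction'' terms against $A$ that produces both $\frac{1}{n-1}(|\nabla\alpha|^2-\alpha^2|A|^2)$ and, after handling $\nabla^2 H$ with $H=\frac{h\alpha}{2}$, the combination $\frac{1}{n-1}(\alpha^2 f-\langle\nabla\alpha^2,\nabla f\rangle)$. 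None of this requires a new idea, but as written your proposal has not yet verified the coefficients; if you want a complete proof you should carry that contraction through explicitly.
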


\noindent  Now we prove the identities in Proposition \ref{lem-1} imply some  integral identities for complete $f$-minimal hypersurfaces  (the case of closed $f$-minimal hypersurfaces was proved in \cite{CMZ3}). We denote by  $p\in \Sigma$ and $B_j$  a fixed point  and  the geodesic sphere of $\Sigma$ of radius $j$ centered at $p$, respectively.  Let $\varphi_j$  be the nonnegative cut-off functions satisfying  that  $\varphi_j$ is $1$ on $B_j$,  $|\nabla\varphi_j|\leq 2$ on $B_{j+1}\setminus B_j$, and $\varphi_j=0$ on $\Sigma\setminus B_{j+1}$.
\begin{prop}\label{lem-alpha}
Let $\Sigma$ be a complete oriented $f$-minimal hypersurface  properly immersed in $(\mathbb{S}^{n}(\sqrt{2(n-1)})\times\mathbb{R},\bar{g}, f)$.  Assume that $\int_{\Sigma}|A|^2e^{-f}<\infty$. Then
\begin{align}
&\int_{\Sigma}|A|^2\alpha e^{-f}d\sigma=0, \label{alpha-eq-2}\\
& \int_{\Sigma}|\nabla\alpha|^2e^{-f}d\sigma-\int_{\Sigma}\alpha^2|A|^2e^{-f}d\sigma=0.\label{alpha-eq}
\end{align}
\end{prop}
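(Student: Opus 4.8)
The plan is to prove both identities by one and the same scheme: put each pointwise identity of Proposition \ref{lem-1} into a form that integrates cleanly, multiply by the cut-off $\varphi_j$, integrate over $\Sigma$ against $e^{-f}d\sigma$, integrate by parts with the weighted Green's formula stated in Section \ref{sec-notation}, and let $j\to\infty$. Three facts drive everything. First, since $\Sigma$ is properly immersed, Proposition \ref{prop-1-cor} gives that it has finite weighted volume, so $\int_\Sigma e^{-f}d\sigma<\infty$. Second, $\alpha=\langle\frac{\partial}{\partial h},\nu\rangle$ is a component of the unit normal, hence $|\alpha|\le 1$. Third, by (\ref{ine-nabla}) one has $|\nabla\alpha|^2\le|A|^2$, so the hypothesis $\int_\Sigma|A|^2e^{-f}<\infty$ also yields $\int_\Sigma|\nabla\alpha|^2e^{-f}<\infty$. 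Together with $\alpha^2\le1$, these bounds make all the integrands below dominated by the integrable function $|A|^2e^{-f}$.

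For (\ref{alpha-eq-2}), I would first rewrite (\ref{delta-0}). Since $L_f=\Delta_f+|A|^2+\overline{\Ric}_f(\nu,\nu)$ and $\overline{\Ric}_f(\nu,\nu)=\frac12$ by (\ref{ric-f}), the equation $L_f\alpha=\frac12\alpha$ reduces to $\Delta_f\alpha=-|A|^2\alpha$. Multiplying by $\varphi_j$ and integrating by parts gives
\[
\int_\Sigma\varphi_j|A|^2\alpha\,e^{-f}d\sigma=\int_\Sigma\langle\nabla\alpha,\nabla\varphi_j\rangle e^{-f}d\sigma.
\]
As $j\to\infty$ the left-hand side tends to $\int_\Sigma|A|^2\alpha\,e^{-f}d\sigma$ by dominated convergence, while the right-hand side is bounded by $2\int_{B_{j+1}\setminus B_j}|\nabla\alpha|e^{-f}d\sigma$, which by Cauchy--Schwarz is at most $2\bigl(\int_{B_{j+1}\setminus B_j}|\nabla\alpha|^2e^{-f}d\sigma\bigr)^{1/2}\bigl(\int_\Sigma e^{-f}d\sigma\bigr)^{1/2}$ and hence tends to $0$: the first factor is the tail of a convergent integral and the second is finite. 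This yields (\ref{alpha-eq-2}).

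For (\ref{alpha-eq}), I would integrate (\ref{delta-1}) directly. Multiplying $\frac12\Delta_f\alpha^2=|\nabla\alpha|^2-|A|^2\alpha^2$ by $\varphi_j$ and integrating by parts (using $\nabla\alpha^2=2\alpha\nabla\alpha$) gives
\[
\int_\Sigma\varphi_j\bigl(|\nabla\alpha|^2-|A|^2\alpha^2\bigr)e^{-f}d\sigma=-\int_\Sigma\alpha\langle\nabla\alpha,\nabla\varphi_j\rangle e^{-f}d\sigma.
\]
The left-hand side converges to $\int_\Sigma(|\nabla\alpha|^2-|A|^2\alpha^2)e^{-f}d\sigma$ by dominated convergence, and the right-hand side is bounded (using $|\alpha|\le1$) by the same quantity $2\int_{B_{j+1}\setminus B_j}|\nabla\alpha|e^{-f}d\sigma$ as before, so it tends to $0$. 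This gives (\ref{alpha-eq}).

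The only delicate point is the vanishing of the gradient (boundary) terms, and this is precisely where properness enters: without $\int_\Sigma e^{-f}d\sigma<\infty$ the Cauchy--Schwarz bound would not close, and the tail $\int_{B_{j+1}\setminus B_j}|\nabla\alpha|^2e^{-f}d\sigma\to0$ uses the global integrability of $|\nabla\alpha|^2e^{-f}$. I should also note that the $\varphi_j$ are only Lipschitz, so the weighted integration-by-parts identity is applied to Lipschitz compactly supported functions, which is justified by the usual approximation by $C_0^1$ functions; everything else is routine dominated convergence.
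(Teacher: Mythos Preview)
Your proof is correct and follows essentially the same approach as the paper: rewrite \eqref{delta-0} as $\Delta_f\alpha=-|A|^2\alpha$, multiply the relevant pointwise identities by the cut-off $\varphi_j$, integrate by parts, and kill the boundary term via Cauchy--Schwarz using $\int_\Sigma|\nabla\alpha|^2e^{-f}<\infty$ together with finite weighted volume. The only cosmetic differences are that the paper treats $\int_\Sigma\varphi_j(\Delta_f\alpha^k)e^{-f}\to0$ for $k=1,2$ in one stroke and invokes monotone (rather than dominated) convergence, and its Cauchy--Schwarz factorization places the tail on the weighted-volume factor instead of on the $|\nabla\alpha|^2$ factor; both variants work equally well.
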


\begin{proof} Note
$L_f=\Delta_f+|A|^2+\frac12$. \eqref{delta-0} implies that
\begin{equation}\label{eq-alpha-2-1}
\Delta_f\alpha+|A|^2\alpha=0.
\end{equation}
If $\Sigma$ is closed,  we get \eqref{alpha-eq-2} and  \eqref{alpha-eq} by integrating \eqref{eq-alpha-2-1} and \eqref{delta-1},  and then using the Stokes'  formula respectively. Consider the case of non-compact  $\Sigma $. Since $|\alpha|\leq 1$ and $|\nabla\alpha|^2\leq |A|^2$,  we have 
$$ \int_{\Sigma}|\nabla\alpha|^2e^{-f}< \infty, \quad \int_{\Sigma}\alpha^2|A|^2e^{-f}<\infty.$$  Here and thereafter, for simplicity of notations, we omit $d\sigma$ in the computation of integrals.  For any positive integer $k$,
\begin{equation}\label{ine-alpha-2-1}\begin{split}
\left|\int_{\Sigma}\varphi_j(\Delta_f\alpha^k) e^{-f}\right|
&=\left|k\int_{\Sigma}\alpha^{k-1}\langle\nabla\varphi_j,\nabla\alpha\rangle e^{-f}\right|\\
&\leq k\left(\int_{\Sigma}|\nabla\varphi_j|^2e^{-f}\right)^{\frac12}\left(\int_{\Sigma}|\nabla\alpha|^2e^{-f}\right)^{\frac12}\\
&\leq2k\left(\int_{B_{j+1}\setminus B_j}e^{-f}\right)^{\frac12}\left(\int_{\Sigma}|\nabla\alpha|^2e^{-f}\right)^{\frac12}
\end{split}
\end{equation}
Since the right-hand side of \eqref{ine-alpha-2-1} tends to zero as $j\to \infty$,
\begin{equation}\label{eq-alpha-2-5}\lim_{j\rightarrow\infty}\int_{\Sigma}\varphi_j(\Delta_f\alpha^k) e^{-f}=0.
\end{equation}
Multiplying \eqref{eq-alpha-2-1} by  the  cut-off functions $\varphi_j$, we have 
\begin{equation}\label{eq-alpha-2-2}\int_{\Sigma}\varphi_j(\Delta_f\alpha) e^{-f}+\int_{\Sigma}\varphi_j|A|^2\alpha e^{-f}=0.
\end{equation}
Letting $j\to \infty$ in \eqref{eq-alpha-2-2}, by \eqref{eq-alpha-2-5} for $k=1$ and the monotone convergence theorem, we get the identity $\int_{\Sigma}|A|^2\alpha e^{-f}=0,$ that is \eqref{alpha-eq-2}.

Now we  prove \eqref{alpha-eq}. Multiplying \eqref{delta-1} by  $\varphi_j$ and integrating, we have 
\begin{align}\label{alpha-int}
\frac{1}{2}\int_{\Sigma}\varphi_j(\Delta_f\alpha^{2})e^{-f}
&=\int_{\Sigma}\varphi_j|\nabla\alpha|^{2}-\int_{\Sigma}\varphi_j|A|^{2}\alpha^{2}e^{-f}.
\end{align}
Letting $j\to\infty$ on both sides of  \eqref{alpha-int}, by \eqref{eq-alpha-2-5} for $k=2$  and  the monotone convergence theorem, we get \eqref{alpha-eq}.
 
 \end{proof}
 
 \begin{prop}\label{lem-A}
Let $\Sigma$ be a complete oriented $f$-minimal hypersurface properly  immersed in $(\mathbb{S}^{n}(\sqrt{2(n-1)})\times\mathbb{R},\bar{g}, f)$.  Assume that  $\int_{\Sigma}|A|^4e^{-f}<\infty$. Then
\begin{equation}
-\int_{\Sigma}|\nabla H|^{2}e^{-f}+\int_{\Sigma}H^{2}|A|^{2}e^{-f}+\frac14\int_{\Sigma}\alpha^{2}(1-\alpha^{2})e^{-f}=0,\label{lem-H-eq}
\end{equation}
\begin{equation}
\int_{\Sigma}|\nabla A|^{2}e^{-f}+\int_{\Sigma}|A|^{2}(\frac12-|A|^{2})e^{-f}-\frac{1}{2(n-1)}\int_{\Sigma}\alpha^{2}(1-\alpha^{2})e^{-f}=0. \label{lem-A-eq}
\end{equation}
\end{prop}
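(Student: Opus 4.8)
The plan is to integrate the pointwise identities \eqref{delta-2} and \eqref{delta-3} of Proposition \ref{lem-1} against the weighted measure $e^{-f}d\sigma$, using the same cut-off functions $\varphi_j$ as in the proof of Proposition \ref{lem-alpha} and letting $j\to\infty$. Once the limiting argument is justified, each left-hand side $\tfrac12\Delta_f(\,\cdot\,)$ integrates to zero, so the real content is to evaluate the two ``extra'' terms that involve $f$, namely $\int_\Sigma\langle\nabla\alpha^2,\nabla f\rangle e^{-f}$ and $\int_\Sigma\alpha^2 f\,e^{-f}$.

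First I would record two elementary facts. Since the $\mathbb{R}$-factor is flat and $\Sigma$ is $f$-minimal, the height function $h$ satisfies $\Delta_f h=-\tfrac h2$ (because $\Delta_\Sigma h=-H\alpha$ and $\langle\nabla f,\nabla h\rangle=\tfrac h2(1-\alpha^2)$, combined with \eqref{eq-H}). Writing $f=\tfrac{h^2}{4}$, the chain rule for $\Delta_f$ then gives
\[
\Delta_f f=\tfrac12(1-\alpha^2)-f .
\]
Integrating by parts against $\varphi_j$ and letting $j\to\infty$ (the stated self-adjointness of $\Delta_f$ with $u=\varphi_j\alpha^2$, $w=f$) yields
\[
\int_\Sigma\langle\nabla\alpha^2,\nabla f\rangle e^{-f}
=-\int_\Sigma\alpha^2\,\Delta_f f\,e^{-f}
=-\tfrac12\int_\Sigma\alpha^2(1-\alpha^2)e^{-f}+\int_\Sigma\alpha^2 f\,e^{-f}.
\]
Moreover \eqref{eq-H} gives $H^2=f\alpha^2$, so $\int_\Sigma\alpha^2 f\,e^{-f}=\int_\Sigma H^2 e^{-f}$.

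With these in hand the two identities drop out by bookkeeping. Integrating \eqref{delta-2} gives (all integrals against $e^{-f}$) the relation $0=\int|\nabla H|^2-\int(|A|^2+\tfrac12)H^2+\tfrac12\int\langle\nabla\alpha^2,\nabla f\rangle$; substituting the displayed computation, the two $\tfrac12\int H^2 e^{-f}$ terms cancel and one obtains exactly \eqref{lem-H-eq}. Integrating \eqref{delta-3}, the term $\int(|\nabla\alpha|^2-\alpha^2|A|^2)e^{-f}$ vanishes by \eqref{alpha-eq} of Proposition \ref{lem-alpha}, while $\int(\alpha^2 f-\langle\nabla\alpha^2,\nabla f\rangle)e^{-f}=\tfrac12\int\alpha^2(1-\alpha^2)e^{-f}$ by the same computation, producing the coefficient $-\tfrac1{2(n-1)}$ and hence \eqref{lem-A-eq}.

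The main obstacle is the analytic justification of the limits: that $\lim_{j\to\infty}\int_\Sigma\varphi_j\,\Delta_f(H^2)e^{-f}=0$ and the analogue for $|A|^2$, together with the finiteness of $\int|\nabla H|^2 e^{-f}$ and $\int|\nabla A|^2 e^{-f}$ (which appear as terms in the identities). By Proposition \ref{prop-1-cor} the weighted volume is finite, so $\int|A|^4 e^{-f}<\infty$ yields $\int|A|^2 e^{-f}<\infty$ by Cauchy--Schwarz, and $H^2\le n|A|^2$ gives $\int H^2 e^{-f},\int H^4 e^{-f}<\infty$. To obtain the gradient bounds I would multiply \eqref{delta-2} and \eqref{delta-3} by $\varphi_j^2$, integrate by parts once, and absorb $\int\varphi_j^2|\nabla H|^2 e^{-f}$ (resp. $\int\varphi_j^2|\nabla A|^2 e^{-f}$, using Kato's inequality $|\nabla|A||\le|\nabla A|$) via Young's inequality; the residual error is $\lesssim\int_{B_{j+1}\setminus B_j}|A|^2 e^{-f}\to0$, forcing $\int|\nabla H|^2 e^{-f},\int|\nabla A|^2 e^{-f}<\infty$. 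Once these are finite, the linear cut-off integrals are controlled on the annulus, e.g. by $C(\int_{B_{j+1}\setminus B_j}|A|^2 e^{-f})^{1/2}(\int_\Sigma|\nabla A|^2 e^{-f})^{1/2}\to0$, and likewise for $H$. The one term needing separate care is $\langle\nabla\alpha^2,\nabla f\rangle$, whose cut-off error and integrability I would handle using $|\nabla\alpha|\le|A|$, $|\nabla f|\le\tfrac h2$, and the finiteness of $\int_\Sigma h^2 e^{-f}$, which follows from polynomial volume growth since the Gaussian weight beats the polynomial growth. Passing to the limit then delivers \eqref{lem-H-eq} and \eqref{lem-A-eq}.
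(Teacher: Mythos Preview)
Your proposal is correct and follows essentially the same route as the paper: cut off against $\varphi_j^2$, compute $\Delta_f f=\tfrac12(1-\alpha^2)-f$ to rewrite $\int\langle\nabla\alpha^2,\nabla f\rangle e^{-f}$, use $H^2=\alpha^2 f$, invoke \eqref{alpha-eq} for the $|\nabla\alpha|^2-\alpha^2|A|^2$ term, and justify the limits via Kato plus Young's inequality together with polynomial volume growth. The one minor difference is that the paper obtains $\int_\Sigma|\nabla H|^2 e^{-f}<\infty$ from the explicit pointwise bound $|\nabla H|^2\le\tfrac12\alpha^2(1-\alpha^2)+\tfrac{h^2}{2}|A|^2$ (computed from $\nabla_{e_i}H=\overline\nabla^2 f(e_i,\nu)+\sum_j a_{ij}\langle\overline\nabla f,e_j\rangle$) combined with $\int_\Sigma h^4 e^{-f}<\infty$, rather than by reusing the absorption argument you apply to $|\nabla A|^2$; both work.
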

 \begin{proof} Suppose that $\Sigma$ is complete noncompact  (the compact case was considered in \cite{CMZ3}). First we prove \eqref{lem-H-eq}. By $|\alpha|\leq1$ and finite weighted volume of $\Sigma$, $$\int_{\Sigma}\alpha^{2}(1-\alpha^{2})e^{-f}<\infty.$$ 
 Note that $$\int_{\Sigma}|A|^2e^{-f}\leq \left(\int_{\Sigma}|A|^4e^{-f}\right)^{\frac12}\left(\int_{\Sigma}e^{-f}\right)^{\frac12}<\infty.$$ By $H^2\leq n|A|^2$ and  the assumptions of the proposition, $$\int_{\Sigma}H^2e^{-f}<\infty, \quad \int_{\Sigma}|A|^2H^2e^{-f}<\infty.$$
  Under  a local orthonormal frame $\{e_i\}_{i=1}^n$ on $\Sigma$, 
\begin{align*}
\nabla_{e_i} H&=
\langle\overline{\nabla}_{e_i}\overline{\nabla}f,\nu\rangle+\langle\overline{\nabla} f,\overline{\nabla}_{e_i}\nu\rangle\\
&=\overline{\nabla}^2f(e_i,\nu)+\langle\overline{\nabla} f, a_{ij}e_j\rangle\\
&=\frac12\langle e_i,\frac{\partial}{\partial h}\rangle\langle \nu,\frac{\partial}{\partial h}\rangle+\frac{a_{ij}h}{2}\langle\frac{\partial}{\partial h},e_j\rangle.
\end{align*}
Then
\begin{equation}\label{ine-grad-H}\begin{split}
|\nabla H|^2&=\displaystyle\sum_{i=1}^n |\nabla_{e_i} H|^2\\
&\leq \frac12\sum_{i=1}^n\langle e_i,\frac{\partial}{\partial h}\rangle^2\langle \nu,\frac{\partial}{\partial h}\rangle^2+\displaystyle\sum_{i,j=1}^n\frac{a_{ij}^2h^2}{2}\langle\frac{\partial}{\partial h},e_j\rangle^2\\
&\leq  \frac12\alpha^2(1-\alpha^2)+\frac{h^2}{2}|A|^2\\
&\leq \frac12\alpha^2(1-\alpha^2)+\frac14(h^4+|A|^4).
\end{split}
\end{equation}
Since $\Sigma$ has polynomial volume growth, for any positive integer $k$,
\begin{equation}\label{ine-tk}
\begin{split}
\int_{\Sigma}h^ke^{-f}&=\int_{\Sigma\cap B^{M}_{r_{0}}(p)}h^ke^{-\frac{h^2}{4}}+\sum_{i=0}^{\infty}\int_{\Sigma\cap(B^{M}_{r_{0}+i+1}(p)\backslash B^{M}_{r_{0}+i}(p))}h^ke^{-\frac{h^2}{4}}\\
&\leq C_{1}\text{Vol}(\Sigma\cap B^{M}_{r_{0}}(p))\\
&\quad+C\sum_{i=0}^{\infty}(r_0+i+1)^ke^{-\frac{1}{4}(r_{0}+i)^{2}}\text{Vol}(\Sigma\cap B_{r_{0}+i+1}^{M}(p))\\
&\leq C\bigr[r_{0}^{d}+\sum_{i=0}^{\infty}e^{-\frac{1}{4}(r_{0}+i-c)^{2}}(r_{0}+i+1)^{d+k}\bigr]\\
&< \infty.
\end{split}
\end{equation}
 \eqref{ine-grad-H}, \eqref{ine-tk} and  the assumption of the proposition imply that $$\int_{\Sigma}|\nabla H|^2e^{-f}<\infty.$$
Multiplying \eqref{delta-2} by  $\varphi_j^2$, we have 
\begin{align}
\frac12\int_{\Sigma}\varphi_j^2(\Delta_fH^{2})e^{-f}&=\int_{\Sigma}\varphi_j^2|\nabla H|^{2}e^{-f}-\int_{\Sigma}\varphi_j^2|A|^{2}H^2e^{-f}\nonumber\\
&\quad-\frac12\int_{\Sigma}\varphi_j^2H^{2}e^{-f}+\frac12\int_{\Sigma}\varphi_j^2\langle\nabla\alpha^{2},\nabla f\rangle e^{-f}.\label{eq-H-11}
\end{align}
Note
\begin{equation}\label{ine-Hh-01}\begin{split}
\left|\int_{\Sigma}\varphi_j^2(\Delta_fH^{2})e^{-f}\right|
&=4\left|\int_{\Sigma}H\varphi_j\langle\nabla\varphi_j,\nabla H\rangle e^{-f}\right|\\
&\leq 4\left(\int_{\Sigma}|\nabla\varphi_j|^2H^2e^{-f}\right)^{\frac12}\left(\int_{\Sigma}\varphi_j^2|\nabla H|^2e^{-f}\right)^{\frac12}\\
&\leq 4\left(\int_{B_{j+1}\setminus B_j}H^2e^{-f}\right)^{\frac12}\left(\int_{\Sigma}|\nabla H|^2e^{-f}\right)^{\frac12}.\end{split}
\end{equation}
 Since the right-hand side of \eqref{ine-Hh-01} tends to zero as $j\to\infty$,  
 \begin{equation}\label{eq-H-2-8}
 \displaystyle\lim_{j\to\infty}\int_{\Sigma}\varphi_j^2(\Delta_fH^{2})e^{-f}=0.
 \end{equation}
Observe that
\begin{equation}\label{eqs9}\begin{split}
\Delta_ff&=\sum_{i=1}^n[(\overline{\nabla}^2f)_{ii}-a_{ii}f_{\nu}]-\langle\nabla f,\nabla f\rangle\\
&=\frac12\sum_{i=1}^{n}\langle e_{i},\frac{\partial}{\partial h}\rangle^{2}-H{f}_\nu-|\nabla f|^2\\
&=\frac12(1-\alpha^{2})-|\overline{\nabla}f|^{2}\\
&=\frac12(1-\alpha^{2})-f.
\end{split}
\end{equation}
By Stokes' formula,  \eqref{eqs9} and $H^2=\left(\frac{h\alpha}{2}\right)^2=\alpha^2f$, we have
\begin{equation}\label{grad-f-alpha}\begin{split}
&\quad\frac12\int_{\Sigma}\varphi_j^2\langle\nabla\alpha^{2},\nabla f\rangle e^{-f}\\
&=-\frac12\int_{\Sigma}\varphi_j^2\alpha^{2}(\Delta_{f}f)e^{-f}-\frac12\int_{\Sigma}\alpha^{2}\langle\nabla\varphi_j^2,\nabla f\rangle e^{-f}\\
&=-\frac14\int_{\Sigma}\varphi_j^2\alpha^{2}(1-\alpha^{2})e^{-f}+\frac12\int_{\Sigma}\varphi_j^2 H^{2}e^{-f}\\
&\quad -\int_{\Sigma}\varphi_j\alpha^{2}\langle\nabla\varphi_j,\nabla f\rangle e^{-f}.
\end{split}
\end{equation}
 By $|\nabla f|\leq |\overline{\nabla}f|\leq \frac{h}{2}$,
 \begin{align*}
\left(\int_{\Sigma}\varphi_j\alpha^2\langle\nabla\varphi_j,\nabla f\rangle e^{-f}\right)^2&\leq\left(\int_{\Sigma}|\nabla\varphi_j|^2e^{-f}\right)\left(\int_{\Sigma}\varphi_j^2|\nabla f|^2e^{-f}\right)\nonumber\\
  &\leq\left(\int_{B_{j+1}\setminus B_j}e^{-f}\right)\left(\int_{\Sigma}h^2e^{-f}\right).
   \end{align*}
This implies that 
\begin{equation}\label{ine-grad-var}
\displaystyle\lim_{j\to\infty}\int_{\Sigma}\varphi_j\alpha^2\langle\nabla\varphi_j,\nabla f\rangle e^{-f}=0. 
\end{equation}
Letting $j\to\infty$ on the both side of \eqref{grad-f-alpha},  by \eqref{ine-grad-var} and
 the monotone convergence theorem, we have
\begin{equation}\label{eq-varphi-alpha-1}
\lim_{j\rightarrow \infty}\frac12\int_{\Sigma}\varphi_j^2\langle\nabla\alpha^{2},\nabla f\rangle e^{-f}=-\frac14\int_{\Sigma}\alpha^{2}(1-\alpha^{2})e^{-f}+\frac12\int_{\Sigma} H^{2}e^{-f}.
\end{equation}
Letting $j\to\infty$ on the both side of \eqref{eq-H-11} and using \eqref{eq-H-2-8}, \eqref{eq-varphi-alpha-1} and  the monotone convergence theorem, 
 we get \eqref{lem-H-eq}, that is
\begin{equation*}
-\int_{\Sigma}|\nabla H|^{2}e^{-f}+\int_{\Sigma}H^{2}|A|^{2}e^{-f}+\frac14\int_{\Sigma}\alpha^{2}(1-\alpha^{2})e^{-f}=0.
\end{equation*}

Now we prove \eqref{lem-A-eq}. Multiplying \eqref{delta-3} by  $\varphi_j^2$, we have
\begin{equation}\label{eq-Aa-1}
\begin{split}
&\quad\frac{1}{2}\int_{\Sigma}\varphi_j^2\Delta_f|A|^{2}e^{-f}\\
&=\int_{\Sigma}\varphi_j^2|\nabla A|^{2}e^{-f}+\frac12\int_{\Sigma}\varphi_j^2|A|^{2}e^{-f}-\int_{\Sigma}\varphi_j^2|A|^{4}e^{-f}\\
& \quad -\frac{1}{n-1}\int_{\Sigma}\varphi_j^2|\nabla\alpha|^{2}e^{-f}+\frac{1}{n-1}\int_{\Sigma}\varphi_j^2\alpha^{2}|A|^{2}e^{-f}\\
&\quad  -\frac{1}{n-1}\int_{\Sigma}\varphi_j^2\alpha^{2}fe^{-f}+\frac{1}{n-1}\int_{\Sigma}\varphi_j^2\langle\nabla\alpha^{2},\nabla f\rangle e^{-f}.
\end{split}
\end{equation}
Using $|\nabla|A||\leq |\nabla A|$, 
\begin{align}\label{ine-Aa-2}
\frac{1}{2}\int_{\Sigma}\varphi_j^2\Delta_f|A|^{2}e^{-f}&=-2\int_{\Sigma}\langle\varphi_j\nabla\varphi_j, |A|\nabla|A|\rangle e^{-f}\nonumber\\
&\leq  \epsilon\int_{\Sigma}\varphi_j^2|\nabla|A||^2e^{-f}+\frac{1}{\epsilon}\int_{\Sigma}|A|^2|\nabla\varphi_j|^2e^{-f}\\
&\leq  \epsilon\int_{\Sigma}\varphi_j^2|\nabla A|^2e^{-f}+\frac{1}{\epsilon}\int_{\Sigma}|A|^2|\nabla\varphi_j|^2e^{-f}.\nonumber
\end{align}
Substitute \eqref{ine-Aa-2} into \eqref{eq-Aa-1}. We have
\begin{equation}\label{ine-Aa-3}
\begin{split}
&\qquad(1-\epsilon)\int_{\Sigma}\varphi_j^2|\nabla A|^{2}e^{-f}\\
&\quad\leq \int_{\Sigma}\varphi_j^2|A|^{4}e^{-f}
+\frac{1}{\epsilon}\int_{\Sigma}|A|^2|\nabla\varphi_j|^2e^{-f}\\
&\qquad +\frac{1}{n-1}\int_{\Sigma}\varphi_j^2|\nabla\alpha|^{2}e^{-f}-\frac{1}{n-1}\int_{\Sigma}\varphi_j^2\alpha^{2}|A|^{2}e^{-f}\\
&\qquad +\frac{1}{n-1}\int_{\Sigma}\varphi_j^2\alpha^{2}fe^{-f}-\frac{1}{n-1}\int_{\Sigma}\varphi_j^2\langle\nabla\alpha^{2},\nabla f\rangle e^{-f}.
\end{split}
\end{equation}
 Observe that  $\int_{\Sigma}\alpha^2fe^{-f}=\int_{\Sigma}H^2e^{-f}<\infty$ and all terms on   the right-hand side of \eqref{ine-Aa-3} converge as $j\to\infty$.  By the monotone convergence theorem, 
$$\int_{\Sigma}|\nabla A|^{2}=\displaystyle\lim_{j\to\infty}\int_{\Sigma}\varphi_j^2|\nabla A|^{2}<\infty.$$
 Furthermore,
 \begin{align}\label{ine-Aa-01}
\left|\frac12\int_{\Sigma}\varphi_j^2(\Delta_f|A|^{2})e^{-f}\right|
&=2\left|\int_{\Sigma}|A|\varphi_j\langle\nabla\varphi_j,\nabla |A|\rangle e^{-f}\right|\nonumber\\
&\leq 2\left(\int_{\Sigma}|\nabla\varphi_j|^2|A|^2e^{-f}\right)^{\frac12}\left(\int_{\Sigma}\varphi_j^2|\nabla |A||^2e^{-f}\right)^{\frac12}\nonumber\\
&\leq 4\left(\int_{B_{j+1}\setminus B_j}|A|^2e^{-f}\right)^{\frac12}\left(\int_{\Sigma}|\nabla A|^2e^{-f}\right)^{\frac12}.
\end{align}
 Since the right-hand side of \eqref{ine-Aa-01} tends to zero as $j\to\infty$,  
\begin{equation} \label{limit-1}
\displaystyle\lim_{j\to\infty}\int_{\Sigma}\varphi_j^2(\Delta_f|A|^{2})e^{-f}=0.
\end{equation}
 Letting $j\to\infty$ on both sides of  \eqref{eq-Aa-1} and using  the monotone convergence theorem, \eqref{limit-1},  \eqref{alpha-eq},  \eqref{eq-varphi-alpha-1} and $H^2=\alpha^2f$, we get \eqref{lem-A-eq}.

\end{proof}

The integral identities in Proposition \ref{lem-A} can be used to classify complete totally geodesic $f$-minimal hypersurfaces.

\medskip

\noindent {\it Proof of Theorem \ref{lem-geodesic}}. Examples \ref{ex-fmin-1} and \ref{ex-fmin-2} say that $f$-minimal hypersurfaces $\mathbb{S}^n(\sqrt{2(n-1)})\times\{0\}$ and $\mathbb{S}^{n-1}(\sqrt{2(n-1)})\times\mathbb{R}$ are totally geodesic. Now we prove the inverse. Since $A\equiv 0$ on $\Sigma$,
by   \eqref{lem-H-eq}, we have $$\int_{\Sigma}\alpha^2(1-\alpha^2)e^{-f}=0.$$ 
Note $|\alpha|\leq 1$. $\alpha^2(1-\alpha^2)\equiv 0$. Hence
  either $\alpha\equiv 0$ or $\alpha^2\equiv 1$ on $\Sigma$ . 
  
 (i) the case of $\alpha^2\equiv 1$.  Without lost of generality,  suppose $\alpha\equiv 1$. This means that  $\nu=\frac{\partial}{\partial h}$ on $\Sigma$. Hence $\Sigma$ must be in a horizontal slice $\mathbb{S}^{n}(\sqrt{2(n-1)})\times\{h\}$. By Example 1,
$\Sigma$ must be  $\mathbb{S}^{n}(\sqrt{2(n-1)})\times\{0\}$. 

(ii) the case of  $\alpha\equiv 0$. In this case, $\frac{\partial}{\partial h}\in T_p\Sigma$ for any $p\in \Sigma$. This implies that any vertical line $\{x\}\times \mathbb{R}$ passing through $\Sigma$ must be a curve in $\Sigma$. Thus $\Sigma=\Sigma_1\times\mathbb{R}$, where $\Sigma_1\subset\mathbb{S}^{n}(\sqrt{2(n-1)})$. Let $\pi:\mathbb{S}^{n}(\sqrt{2(n-1)})\times \mathbb{R}\to \mathbb{S}^{n}(\sqrt{2(n-1)})$ denote the projection onto the first factor $\mathbb{S}^{n}(\sqrt{2(n-1)})$. Since the rank of the differential  $d\pi|_{\Sigma}$ is $n-1$,   $\Sigma_1$ is an $(n-1)$-dimensional hypersurface in  $\mathbb{S}^{n}(\sqrt{2(n-1)})$. Like Example \ref{ex-fmin-2},  the unit normal $\nu$ of $\Sigma$  is the unit normal of $\Sigma_1$. $\Sigma$ is $f$-minimal in $\mathbb{S}^{n}(\sqrt{2(n-1)})\times \mathbb{R}$ if and only if $\Sigma_1$ is minimal in $\mathbb{S}^{n}(\sqrt{2(n-1)})$.  In addition, $\Sigma$ is properly immersed if and only if $\Sigma_1$ is closed.  Furthermore, since $\overline{\nabla}_{\frac{\partial}{\partial h}}\nu=0$,  $A(x,h)=A_{\Sigma_1}(x), (x,h)\in \Sigma_1\times \mathbb{R}$. Hence $\Sigma_1$ is totally geodesic in $\mathbb{S}^{n}(\sqrt{2(n-1)})$. Therefore $\Sigma_1$ is  $\mathbb{S}^{n-1}(\sqrt{2(n-1)})$ and thus $\Sigma$ is $ \mathbb{S}^{n-1}(\sqrt{2(n-1)})\times\mathbb{R}$.

\qed

\section{$L_f$-index of  complete $f$-minimal hypersurfaces}\label{sec-index}

Our purpose of  this section is to prove Theorem \ref{th-index}. First we prove that  $\mathbb{S}^{n-1}(\sqrt{2(n-1)})\times\mathbb{R}\subset (\mathbb{S}^{n}(\sqrt{2(n-1)})\times\mathbb{R},\bar{g},f)$ has $L_f$-index one.

\begin{prop}\label{index-S-(n-1)}
The $f$-minimal hypersurface  $\mathbb{S}^{n-1}(\sqrt{2(n-1)})\times\mathbb{R}$ in the  cylinder shrinking soliton $(\mathbb{S}^{n}(\sqrt{2(n-1)})\times\mathbb{R},\bar{g},f)$ satisfies that $L_f$-ind $(\mathbb{S}^{n-1}(\sqrt{2(n-1)})\times\mathbb{R})=1$.
\end{prop}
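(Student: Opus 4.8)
The plan is to compute the operator $L_f$ explicitly on $\Sigma=\mathbb{S}^{n-1}(\sqrt{2(n-1)})\times\mathbb{R}$ and to diagonalize it by separation of variables. Since $\Sigma$ is totally geodesic (Example \ref{ex-fmin-2}), we have $A\equiv 0$, and by \eqref{ric-f}, $\overline{\textrm{Ric}}_f(\nu,\nu)=\frac12$; hence $L_f=\Delta_f+\frac12$. Because the metric is a product and $f(x,h)=\frac{h^2}{4}$ depends only on the $\mathbb{R}$-coordinate, the drifted Laplacian splits as $\Delta_f=\Delta_{\mathbb{S}^{n-1}}+\mathcal{L}_h$, where $\mathcal{L}_h=\partial_h^2-\frac{h}{2}\partial_h$ is the one-dimensional drifted Laplacian attached to the Gaussian weight $e^{-h^2/4}$.

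First I would record the two spectra. The round sphere $\mathbb{S}^{n-1}(\sqrt{2(n-1)})$ is compact, so $-\Delta_{\mathbb{S}^{n-1}}$ has purely discrete nonnegative spectrum, and a direct computation gives its first nonzero eigenvalue $\frac{n-1}{2(n-1)}=\frac12$, attained by the coordinate functions. The operator $-\mathcal{L}_h$ on $L^2(e^{-h^2/4}dh)$ has compact resolvent, hence purely discrete spectrum $\{m/2:m=0,1,2,\ldots\}$ with the Hermite functions as eigenfunctions; its bottom is $0$ (constants) and its first nonzero eigenvalue is $\frac12$, attained by $h$. Consequently $-\Delta_f$ has purely discrete spectrum given by all sums $\mu_k+\nu_m$ of the two eigenvalue sets: its bottom is $0$, attained only by the constants (multiplicity one), and its spectral gap is exactly $\frac12$.

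Next I would translate this into information about $L_f=\frac12-(-\Delta_f)$. Its eigenvalues are $\frac12-(\mu_k+\nu_m)$, so $L_f$ has exactly one positive eigenvalue, namely $\frac12$, coming from the constant eigenfunction, while every other eigenvalue is $\le 0$. To read off the index I would verify hypothesis (ii) of Proposition \ref{mu1} with $W$ the one-dimensional span of the constant function: the constant is an eigenfunction whose problem-eigenvalue $\lambda=-\frac12$ is negative and lies in $L^2(e^{-f}d\sigma)$ since $\Sigma$ has finite weighted volume, whereas for any mean-zero $\phi\in C_0^{\infty}(\Sigma)\cap W^{\perp}$ the spectral-gap (weighted Poincar\'e) inequality $\int_\Sigma|\nabla\phi|^2e^{-f}d\sigma\ge\frac12\int_\Sigma\phi^2e^{-f}d\sigma$ gives $B_f(\phi,\phi)\ge 0$. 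Proposition \ref{mu1} then yields $L_f$-ind$(\Sigma)=\dim W=1$. For the lower bound independently, cutting off the constant by the functions $\varphi_j$ gives $B_f(\varphi_j,\varphi_j)\to-\frac12\int_\Sigma e^{-f}d\sigma<0$, so some compact domain already carries a negative eigenvalue.

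The main obstacle will be the spectral bookkeeping rather than any single estimate: one must establish that $-\Delta_f$ on the noncompact $\Sigma$ has purely discrete spectrum, so that the count of positive $L^2$-eigenvalues of $L_f$ is meaningful and finite, which rests on the confining effect of the Gaussian weight on the $\mathbb{R}$-factor; and one must bridge this $L^2$ count to the definition of the index through compact domains, which is the role of Proposition \ref{mu1}. The delicate point is numerical: the spectral gap of $-\Delta_f$ equals $\frac12$, exactly the size of the zeroth-order term in $L_f$, a coincidence forced by the choice of radius $\sqrt{2(n-1)}$. This exact matching is what keeps the index equal to one, since the entire first eigenspace of $-\Delta_f$ contributes eigenvalue $0$ to $L_f$ and therefore does not count toward the index.
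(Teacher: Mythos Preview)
Your proposal is correct and follows essentially the same route as the paper: both compute $L_f=\Delta_f+\tfrac12$ on the totally geodesic product, separate variables into the sphere Laplacian and the one-dimensional Hermite operator, read off that the combined spectrum $\{\lambda_k+\eta_l-\tfrac12\}$ has exactly the single negative value $-\tfrac12$ at the constants, and then invoke Proposition~\ref{mu1} to convert this into $L_f\text{-ind}(\Sigma)=1$. Your additional remarks on discreteness of the spectrum via the Gaussian weight and the explicit Poincar\'e-gap verification of $B_f(\phi,\phi)\ge 0$ on $W^{\perp}$ are consistent with (and slightly more explicit than) the paper's appeal to the variational characterization.
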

\begin{proof} For $\Sigma=\mathbb{S}^{n-1}(\sqrt{2(n-1)})\times\mathbb{R}$, the normal $\nu\in T\mathbb{S}^{n}(\sqrt{2(n-1)})$,
$\nabla f=(\overline{\nabla}f)^{\top}=\frac{h}{2} \frac{\partial}{\partial h}$, and $|A|^{2}=0$.
Hence, 
\begin{equation}
L_{f}=\Delta-\frac h2 \langle  \frac{\partial}{\partial h}, \nabla\cdot\rangle +\frac12.
\end{equation}
Let $\psi(x), x\in \mathbb{S}^{n-1}(\sqrt{2(n-1)})$ and $\rho(h), h\in \mathbb{R}$ satisfy the following eigenvalue problems respectively:
\begin{align}
\Delta_{\mathbb{S}^{n-1}(\sqrt{2(n-1)})}\psi(x)&=-\lambda\psi(x),\\
 \frac{d^2\rho}{dh^2}(h)-\frac h2 \frac{d\rho}{dh}(h)&=-\eta\rho(h).\label{her}
 \end{align}
  Let $\{e_i\}, i=1,\cdots, n-1,$ denotes the orthonormal frame on $\mathbb{S}^{n-1}(\sqrt{2(n-1)})$. Then 
\begin{equation}\label{eq-eigen-1}\begin{split}
L_{f}\psi(x)\rho(h)&=\sum_{i=1}^{n-1}\nabla_i\nabla_i(\psi(x)\rho(h))+\nabla_{\frac{\partial}{\partial h}}\nabla_{\frac{\partial}{\partial h}}(\psi(x)\rho(h))\\
&\quad -\frac h2 \langle  \frac{\partial}{\partial h}, \nabla(\psi(x)\rho(h))\rangle +\frac12\psi(x)\rho(h)\\
&=\left(\Delta_{\mathbb{S}^{n-1}(\sqrt{2(n-1)})}\psi(x)\right)\rho(h)+\frac12\psi(x)\rho(h)\\
&\quad+\psi(x)\left( \frac{d^2\rho}{dh^2}(h)-\frac h2 \frac{d\rho}{dh}(h)\right)\\
&=(-\lambda-\eta+\frac12)\psi(x)\rho(h).
\end{split}
\end{equation}

\noindent  It is known that the eigenvalues  of the Laplacian $\Delta_{\mathbb{S}^{n-1}(\sqrt{2(n-1)})}$, counted with multiplicity,  are 
\begin{equation*}
\{\lambda_k: k=0,1,2, \ldots,\}=\{0,  \underbrace{\hbox{$\frac{1}{2},\ldots, \frac12$}}_{\hbox{n}}, \frac{n}{n-1}, \ldots\}
\end{equation*}
and there exists a complete orthonormal system for space $L^2( d\sigma_{\mathbb{S}^{n-1}(\sqrt{2(n-1)})})$ consisting of eigenfunctions $\psi_k$ of $\Delta_{\mathbb{S}^{n-1}(\sqrt{2(n-1)})}$ associated to $\lambda_k$.  On the other hand, for the operator $\frac{d^2}{dh^2}-\frac{h}{2}\frac{d}{dh}, h\in \rr$, it is known that its spectrum  on $L^2(\rr,e^{-\frac{h^2}{4}}dh)$ is discrete, the eigenvalues of the operator $\frac{d^2}{dh^2}-\frac{h}{2}\frac{d}{dh}, h\in \rr$, counted by multiplicity, are 
\begin{equation*}
\{\eta_l: l=0,1,2, \ldots,\}=\{0,  \frac12, 1, \ldots\},
\end{equation*}
 and the so-called Hermite polynomials $\rho_l(h)$ are  orthonormal eigenfunctions associated to $\eta_l$,  which form a complete orthonormal system for the weighted space $L^2(\rr, e^{-\frac{h^2}{4}}dh)$.  Through a standard argument, one can verify  that $\{\psi_k(x)\rho_l(h)\}$ is a complete orthonormal system for  space $L^2(\Sigma, e^{-f}d\sigma)$,  where $d\sigma=d\sigma_{\mathbb{S}^{n-1}(\sqrt{2(n-1)})}dh$, and  $\psi_k(x)\rho_l(h)$ are the eigenfunctions associated to  the eigenvalues counted with multiplicity:
\begin{equation*}
\mu_{k,l}=\lambda_{k}+\eta_l-\frac12.
\end{equation*}
Note that
\begin{align*}
\mu_{0,0}&=-\frac12,\\
\mu_{k,l}&\geq 0\qquad \textrm{for all other} 
\quad k, l,\\
 \mu_{k,l} &\rightarrow +\infty.
\end{align*}
Hence the spectrum of the operator $L_f$ is discrete. $L_{f}$ has only a  negative eigenvalue $\mu_{0,0}$ with multiplicity one and the associated eigenfunction $\psi_0\rho_0\equiv 2\sqrt{\pi}$.  By the variational characterization of eigenvalues, we have that for any $\varphi\in \{\psi_0\rho_0\}^{\perp}\cap C_0^{\infty}(\Sigma)$,  $Q(\varphi,\varphi)\geq 0$. Hence by Proposition \ref{mu1},  the $L_f$-index of  $\mathbb{S}^{n-1}(\sqrt{2(n-1)})\times\mathbb{R}$ is $1$.

\end{proof}

\begin{remark} In the proof of Proposition \ref{index-S-(n-1)}, the discreteness of $L_f$ can also be obtained by Corollary 3 in \cite{CZ2}, since  $\mathbb{S}^{n-1}(\sqrt{2(n-1)})\times\mathbb{R}$ is totally geodesic and $L_f=\Delta_f+\frac12$. The discreteness of the spectrum of $\Delta_f$  implies that the spectrum of $\Delta_f+\frac12$ is also discrete.
\end{remark}

\begin{lemma}\label{A-finite} Let $\Sigma$ be a complete oriented properly immersed $f$-minimal hypersurface   in $(\mathbb{S}^{n}(\sqrt{2(n-1)})\times\mathbb{R},\bar{g},f)$.  If $\mu_1(L_f)\neq -\infty$, then $$\int_{\Sigma}|A|^2e^{-f}<\infty.$$
\end{lemma}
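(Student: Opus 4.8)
The plan is to combine the positive eigenfunction supplied by Proposition \ref{eigenfunction1} with the integral inequality of Proposition \ref{logf} and, crucially, the finiteness of the weighted volume coming from properness. Throughout, recall that on $\Sigma\subset(\mathbb{S}^{n}(\sqrt{2(n-1)})\times\mathbb{R},\bar g,f)$ one has $\overline{\textrm{Ric}}_f(\nu,\nu)=\tfrac12$ by \eqref{ric-f}, so that $L_f=\Delta_f+|A|^2+\tfrac12$; in the notation of Section \ref{sec-weight} this means $q=|A|^2+\tfrac12$.

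First, since by hypothesis $\mu_1(L_f)\neq-\infty$, Proposition \ref{eigenfunction1} provides a positive $C^2$ function $u$ on $\Sigma$ with $L_f u=-\mu_1(L_f)\,u$. Setting $h=u$ and $\mu=\mu_1(L_f)$, a finite real number, I would apply Proposition \ref{logf} with the test functions $\phi=\varphi_j$, the compactly supported cutoff functions introduced just before Proposition \ref{lem-alpha}, which clearly lie in $W^{1,2}(e^{-f}d\sigma)$. Writing the nonnegative term on the left as $|\nabla\log u|^2$, this yields, for each $j$,
\begin{equation*}
\int_\Sigma \varphi_j^2\left(2|A|^2+1+|\nabla\log u|^2\right)e^{-f}d\sigma \le \int_\Sigma\left(4|\nabla\varphi_j|^2-2\mu\varphi_j^2\right)e^{-f}d\sigma.
\end{equation*}

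Next, I would discard the nonnegative quantity $1+|\nabla\log u|^2$ on the left, leaving $2\int_\Sigma\varphi_j^2|A|^2e^{-f}d\sigma$ bounded above by the right-hand side. The key observation is that this right-hand side is uniformly bounded in $j$: because $\Sigma$ is properly immersed, Proposition \ref{prop-1-cor} guarantees finite weighted volume $V:=\int_\Sigma e^{-f}d\sigma<\infty$; since $0\le\varphi_j\le 1$, $|\nabla\varphi_j|\le 2$, and $\varphi_j$ is supported in $B_{j+1}$, we obtain $\int_\Sigma 4|\nabla\varphi_j|^2e^{-f}d\sigma\le 16V$ and $\left|2\mu\int_\Sigma\varphi_j^2 e^{-f}d\sigma\right|\le 2|\mu|V$. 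Hence the right side is at most $C:=16V+2|\mu|V$, independent of $j$.

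Finally, since $\varphi_j\equiv 1$ on $B_j$, we have $2\int_{B_j}|A|^2e^{-f}d\sigma\le 2\int_\Sigma\varphi_j^2|A|^2e^{-f}d\sigma\le C$; letting $j\to\infty$ and applying the monotone convergence theorem gives $\int_\Sigma|A|^2e^{-f}d\sigma\le C/2<\infty$. The one genuinely substantial ingredient is Proposition \ref{logf}, which converts the finiteness of $\mu_1(L_f)$ into pointwise control on the weight $2q=2|A|^2+1$; once that inequality is in hand, the main point is simply that properness forces the weighted volume, and hence the cutoff error terms, to remain bounded, so that no delicate a priori estimate on $|A|$ itself is required.
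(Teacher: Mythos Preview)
Your proof is correct and follows essentially the same route as the paper: invoke Proposition \ref{eigenfunction1} to produce the positive eigenfunction, feed the cut-off functions $\varphi_j$ into the inequality of Proposition \ref{logf}, bound the right-hand side uniformly using the finite weighted volume from Proposition \ref{prop-1-cor}, and pass to the limit. The only cosmetic difference is that you keep the full $2|A|^2$ on the left while the paper drops one extra nonnegative copy of $|A|^2$ as well, yielding a slightly different harmless constant.
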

\begin{proof} Let  $\varphi_j$  be the cut-off functions as before. So $\varphi_j\in W^{1,2}(e^{-f}d\sigma)$. Since $\mu_1(L_f)\neq -\infty$, by Proposition \ref{eigenfunction1}, there is a $C^2$ positive function $h$ on $\Sigma$ satisfying $L_fh=-\mu_1(L_f)h$. By Proposition \ref{logf}, we have 
\begin{equation}
\int_{\Sigma}\varphi_j^2|A|^2e^{-f}\leq \int_{\Sigma}(4|\nabla\varphi_j|^2-2\mu_1(L_f)\varphi_j^2)e^{-f}.
\end{equation}
So 
\begin{equation}
\int_{B_j}|A|^2e^{-f}\leq \int_{\Sigma}(4+2|\mu_1(L_f)|)e^{-f}<\infty.
\end{equation}
Letting $j\rightarrow\infty$, we obtain the conclusion.
\end{proof}

Now we are ready to prove Theorem \ref{th-index}, which classifies the  complete noncompact $f$-minimal hypersurfaces in $(\mathbb{S}^{n}(\sqrt{2(n-1)})\times\mathbb{R},\bar{g},f)$ of $L_f$-index one.
\medskip

\noindent {\it Proof of Theorem \ref{th-index}.}  It was   proved  in \cite{CMZ2}  that there is no complete stable $f$-minimal hypersurface  with finite weighted volume, immersed in $M^{n+1}$ with $\overline{\Ric}_f\geq \frac 12$. Hence,  Proposition \ref{prop-1-cor} implies that $L_f$-$\textrm{ind}(\Sigma)\geq 1$.
By Lemma 2 in \cite{CMZ3} and Proposition \ref{index-S-(n-1)}, $\mathbb{S}^{n}(\sqrt{2(n-1)})\times\{0\}$ and $\mathbb{S}^{n-1}(\sqrt{2(n-1)})\times\mathbb{R}$ have $L_f$-index one. Now we prove the converse.   Suppose that $L_f$-$\textrm{ind}(\Sigma)= 1$.  

In the case of   $\alpha\not\equiv 0$ on $\Sigma$.  By \eqref{delta-0},  $$L_f\alpha=\frac12\alpha.$$ 
 Note $\int_{\Sigma}\alpha^2e^{-f}\leq \int_{\Sigma} e^{-f}<\infty$.
Hence $\alpha$ is an $ L^{2}(e^{-f}d\sigma)$ eigenfunction  with  negative eigenvalue $-\frac12$.  Since $L_f$-index of $\Sigma$ is one, by Proposition \ref{mu1},  the bottom  $\mu_1(L_f)$  of the spectrum of $L_f$ satisfies that   $\mu_1(L_f)= -\frac12$.   By $|\nabla\alpha|^2\leq |A|^2$ and Lemma \ref{A-finite},  $$\int_{\Sigma}|\nabla\alpha|^2e^{-f}\leq \int_{\Sigma}|A|^2e^{-f}<\infty.$$ 
Hence $\alpha$ is in $W^{1,2}(e^{-f}d\sigma)$.
By Proposition \ref{eigenfunction1}, $\alpha>0$ on $\Sigma$ without lost of  generality. 
Note  the integrability of $|A|^2$ implies that  \eqref{alpha-eq-2} holds on $\Sigma$, that is, $\int_{\Sigma}|A|^2\alpha e^{-f}=0.$
Thus $|A|\equiv 0$ on $\Sigma$.  Note $\mathbb{S}^{n-1}(\sqrt{2(n-1)})\times\mathbb{R}$ has $\alpha\equiv 0$. By Theorem \ref{lem-geodesic}, $\Sigma$ must be $\mathbb{S}^n(\sqrt{2(n-1)})\times\{0\}$. 

In the case of $\alpha\equiv 0$ on $\Sigma$. By the proof of Theorem \ref{lem-geodesic} and Example \ref{ex-fmin-2},    $\Sigma$ is $\Sigma_1\times\mathbb{R}$,  where $\Sigma_1$ is a closed minimal hypersurface in  $\mathbb{S}^n(\sqrt{2(n-1)})$. Note 
\begin{equation}\label{Lf-product}\begin{split}
L_f&=\Delta_f+|A|^2+\frac{1}{2}\\
&=\Delta+\langle\nabla f,\nabla\cdot\rangle+|A_{\Sigma_1}|^2+\frac12\\
&= \left(\Delta_{\Sigma_1}+|A_{\Sigma_1}|^2+\overline{\text{Ric}}_{\mathbb{S}^{n}(\sqrt{2(n-1)})}(\nu,\nu)\right)+\left(\frac{\partial^2}{\partial h^2}+\frac12\frac{\partial}{\partial h}\right). 
\end{split}
\end{equation}
Observe that the first part of the right-hand side of \eqref{Lf-product} is just the Jacobi operator $J_{\Sigma_1}$ for minimal hypersurface $\Sigma_1$. If the index of $\Sigma_1$  is more than $1$, then  there exist at least two linearly independent eigenfunctions $\psi_i(x), i=1,2$ of $J_{\Sigma_1}$ corresponding to negative eigenvalues. Obviously $\int_{\Sigma}\psi_i(x)e^{-\frac{h^2}{4}}d\sigma<\infty$.  Thus $\psi_1(x)$ and $\psi_2(x)$ are the two linearly independent $L^2(e^{-f}d\sigma)$ eigenfunctions of $L_f$ associated  to  negative eigenvalues. By Proposition \ref{mu1},   $L_f$-ind($\Sigma$)$\geq 2$, which contradicts the assumption. Hence the index of $\Sigma_1$ must less than $2$.  It is known that in round sphere $\mathbb{S}^{n}(\sqrt{2(n-1)})$, there is no closed stable minimal hypersurface  and a  closed index one minimal hypersurface must be the totally geodesic spheres $\mathbb{S}^{n-1}(\sqrt{2(n-1)})$ (\cite{S}).  Therefore $\Sigma_1=\mathbb{S}^{n-1}(\sqrt{2(n-1)})$ and so $\Sigma=\mathbb{S}^{n-1}(\sqrt{2(n-1)})\times \mathbb{R}$.

 \qed

\begin{remark} There are weaker conditions on the non-existence of $L_f$-stable $f$-minimal hypersurfaces in a manifold with $\overline{Ric}_f\geq \frac12$. In fact, when $f$-minimal hypersurface $\Sigma^{n}$ has the weighted volume growth satisfying   $\displaystyle\lim_{r\rightarrow\infty}\frac{\log V_f^{\Sigma}(r)}{r}=\beta, \beta<\sqrt{2}$, using  an estimate of the bottom $\mu_1(\Delta_f)$ of the spectrum   of $\Delta_f$ by \cite{B}, \cite{MW},  one can get that $L_f$-$\textrm{ind}(\Sigma)\geq 1$. This result was proved by Impera and Rimoldi in \cite{IR} (Theorem 3.9 \cite{IR}). Since we only studied properly immersed $f$-minimal hypersurfaces, we prefer to let  the first part of Theorem \ref{th-index} as stated.
\end{remark}



\section{Pinching  theorem }\label{sec-pinch}

Recall  the gap phenomenon for the compact minimal hypersurfaces in sphere was first studied in \cite{S} 
by using the integral identity derived from Simons' equation.
In this section, we will prove a pinching theorem and subsequently obtain a gap phenomenon for the norm of the second fundamental form of complete $f$-minimal hypersurfaces in $(\mathbb{S}^{n}(\sqrt{2(n-1)})\times \mathbb{R}, \overline{g},f)$.

\medskip
\noindent {\it Proof of Theorem \ref{th-pinch}.  We only need to consider the case when $\Sigma $ is  non-compact. \eqref{pinch-1} says that $|A|$ is bounded. So $\int_{\Sigma}|A|^2e^{-f}<\infty$ and $ \int_{\Sigma}|A|^4e^{-f}<\infty$. 
Observe that the assumption 
\begin{eqnarray*}
 \left| |A|^2-\frac14\right|\leq \frac14\left(\sqrt{1-\frac{8}{n-1}\alpha^2(1-\alpha^2)}\right)
\end{eqnarray*}
is equivalent to
\begin{equation}\label{equiv-1}\begin{split}
&\quad |A|^{2}(\frac{1}{2}-|A|^{2})-\frac{1}{2(n-1)}\alpha^{2}(1-\alpha^{2})\\
&=-(|A|^2-\frac14)^2+(\frac{1}{4})^2[1-\frac{8}{n-1}\alpha^2(1-\alpha^2)]\geq 0.
\end{split}
\end{equation}
\eqref{equiv-1} and  \eqref{lem-A-eq} imply that  for $n\geq3$, on $\Sigma$
\begin{equation*}
\nabla A\equiv0,
\end{equation*}
and 
\begin{equation}\label{rigi-A}
|A|^{2}(\frac{1}{2}-|A|^{2})-\frac{1}{2(n-1)}\alpha^{2}(1-\alpha^{2})=0.
\end{equation}
Hence, $|A|^{2}$ and $H$ are constants. By  \eqref{lem-H-eq}, we have
\begin{equation*}
\int_{\Sigma}|A|^{2}H^{2}e^{-f}+\frac{1}{4}\int_{\Sigma}\alpha^{2}(1-\alpha^{2})e^{-f}=0.
\end{equation*}
This implies that
\begin{equation*}
\alpha^{2}(1-\alpha^{2})\equiv 0\quad \text{and} \quad H\equiv 0.
\end{equation*}
Then  on $\Sigma$,
\begin{equation*}
\alpha\equiv0\quad\emph{or}\quad\alpha^2\equiv 1.
\end{equation*}

If $\alpha^2\equiv 1$, we may assume that $\nu\equiv \frac{\partial}{\partial h}$. By Example 1, $\Sigma$ must be $\mathbb{S}^{n}(\sqrt{2(n-1)})\times\{0\}$.

If $\alpha^2\equiv 0$, as we have shown in the proof of Theorem \ref{th-index}, $\Sigma$ is $\Sigma_1\times \mathbb{R}$, where $\Sigma_1$ be a closed minimal hypersurface in  $\mathbb{S}^{n}(\sqrt{2(n-1)})$.  Moreover $\nu$  is the normal of $\Sigma_1$. Since $|A|^2$ is constant on $\Sigma$, by \eqref{rigi-A}, $|A|\equiv 0$ or $|A|^2\equiv\frac12$ on $\Sigma$. If $A\equiv 0$, by Theorem \ref{lem-geodesic},  $\Sigma$ must be $\mathbb{S}^{n-1}(\sqrt{2(n-1)})\times\mathbb{R}$. If $|A|^2\equiv\frac12$,  since $\frac{\partial}{\partial h}$ is parallel,  the second fundamental form $A_{\Sigma_1}$ of $\Sigma_1$ satisfies that $|A_{\Sigma_1}|^2=\frac12$.  By the result on minimal hypersurfaces in sphere by Chern-do Carmo-Kobayashi \cite{CdCK} and Lawson \cite{L},  $\Sigma_1$ must be minimal clifford torus $T=\mathbb{S}^k(\sqrt{2k})\times\mathbb{S}^l(\sqrt{2l})$, where $k+l=n-1$. Hence $\Sigma$ is $T\times\mathbb{R}$.

\qed

Corollary \ref{cor-gap} is a straightforward consequence of Theorem \ref{th-pinch} since
$$4\alpha^2(1-\alpha^2)\leq 1.$$

\begin{bibdiv}
\begin{biblist}
\addcontentsline{toc}{chapter}{Bibliography}

\bib{B}{article}{
  author={Bayle, Vincent},
   title={Propri\'et\'es de concavit\'e du profil isop\'erim\'etrique et applications},
   journal={These de Doctorat},
   date={2003},
   number={},
   pages={},
   review={},
   doi={},
}

\bib{C}{article}{
   author={Cao, Huai-Dong},
   title={Geometry of complete gradient shrinking Ricci solitons},
   book={
      series={Advanced Lectures in Mathematics (ALM)},
      volume={17},
      publisher={Int. Press, Somerville, MA},
   },
   date={2011},
   pages={227--246},
   review={},
}

\bib{CL}{article}{
   author={Cao, Huai-Dong},
   author={Li, Haizhong},
   title={A gap theorem for self-shrinkers of the mean curvature flow in
   arbitrary codimension},
   journal={Calc. Var. Partial Differential Equations},
   volume={46},
   date={2013},
   number={3-4},
   pages={879--889},
   issn={0944-2669},
   review={\MR{3018176}},
   doi={10.1007/s00526-012-0508-1},
}
\bib{CLN}{book}{
   author={Chow, Bennett},
   author={Lu, Peng},
   author={Ni, Lei}
   title={Hamilton's Ricci Flow},
   series={Graduate Studies in Mathematics},
   volume={77},
   publisher={American Mathematical Society},
   place={Providence, RI},
   date={2006},
   pages={},
   isbn={},
   review={},
}

\bib{CMZ1}{article}{
   author={Cheng, Xu},
    author={Mejia, Tito},
   author={Zhou, Detang},
   title={Eigenvalue estimate and compactness for $f$-minimal surfaces},
   pages={},
  journal={arXiv:1210.8448},
   volume={},
   date={2012},
   number={},
  issn={},
   review={}
   doi={},
}

\bib{CMZ2}{article}{
   author={Cheng, Xu},
    author={Mejia, Tito},
   author={Zhou, Detang},
   title={Stability and compactness for complete $f$-minimal surfaces},
   pages={},
  journal={arXiv:1210.8076, 2012, to appear in Trans. Amer. Math. Soc.},
   volume={},
   date={},
   number={},
  issn={},
   review={}
   doi={},
}

\bib{CMZ3}{article}{
   author={Cheng, Xu},
    author={Mejia, Tito},
   author={Zhou, Detang},
   title={Simons' type equation  for $f$-minimal hypersurfaces and applications},
   pages={},
  journal={	arXiv:1305.2379 [math.DG]},
   volume={},
   date={2013},
   number={},
  issn={},
   review={}
   doi={},
}

\bib{CZ1}{article}{
   author={Cheng, Xu},
   author={Zhou, Detang},
   title={Volume estimate about shrinkers},
   journal={Proc. Amer. Math. Soc.},
   volume={141},
   date={2013},
   number={2},
   pages={687--696},
   issn={0002-9939},
   review={\MR{2996973}},
   doi={10.1090/S0002-9939-2012-11922-7},
}

\bib{CZ2}{article}{
   author={Cheng, Xu},
   author={Zhou, Detang},
   title={Eigenvalues of the drifted Laplacian on complete metric measure spaces},
   journal={arXiv:1305.4116 [math.DG]},
   volume={},
   date={2013},
   number={},
   pages={},
   issn={},
   review={},
   doi={},
}
		
\bib{CdCK}{article}{
   author={ Chern, S. S.},
    author={do Carmo, M.},
   author={Kobayashi, S.},
   title={Minimal submanifolds of a sphere with second fundamental form of constant length},
   pages={59--75},
  journal={	Functional Analysis and Related Fields},
   volume={},
   date={1970},
   number={},
  issn={},
   review={}
   doi={},
}

\bib{CM3}{article}{
   author={Colding, Tobias H.},
   author={Minicozzi, William P., II},
   title={Generic mean curvature flow I: generic singularities},
   journal={Ann. of Math. (2)},
   volume={175},
   date={2012},
   number={2},
   pages={755--833},
   issn={0003-486X},
   review={\MR{2993752}},
   doi={10.4007/annals.2012.175.2.7},
}

\bib{E}{article}{
   author={Espinar, J.M.},
   title={Manifolds with density, applications and gradient Schr\"{o}dinger operators},
   pages={},
  journal={arXiv:1209.6162v6},
   volume={},
   date={},
   number={},
  issn={},
   review={},
   doi={},
}

\bib{DX}{article}{
   author={Ding, Qi},
   author={Xin, Y.L.},
   title={Volume growth, eigenvalue and compactness for self-shrinkers},
   journal={arXiv:1101.1411v1 [math.DG] 7 Jan 2011},
   volume={},
   date={},
   number={},
   pages={},
   review={},
}

\bib{Fa}{article}{
   author={Fan, E.M.},
   title={Topology of three-manifolds with positive P-scalar curvature},
   journal={Proc. Amer. Math.
Soc. },
   volume={136},
   date={2008},
   number={9},
   pages={3255--3261},
   review={},
}

\bib{F}{article}{
   author={Fischer-Colbrie, D.},
   title={On complete minimal surfaces with finite Morse index in three manifolds},
   journal={Invent. Math.},
   volume={82},
   date={1985},
   number={1},
   pages={121--132},
   review={},
}
\bib{H}{article}{
   author={Hussey, Caleb},
   title={Classification and analysis of low index mean curvature flow self-shrinkers},
   journal={thesis, JHU, June 2012},
   volume={},
   date={},
   number={},
   pages={},
   review={},
}
\bib{IR}{article}{
   author={Debora Impera},
   author={Michele Rimoldi},
   title={Stability properties and topology at infinity of $f$-minimal hypersurfaces},
   journal={ arXiv:1302.6160v1},
   volume={},
   date={2013},
   number={},
   pages={},
   review={},
}
\bib{L}{article}{
   author={Lawson, H. B.},
   title={Local rigidity theorems for minimal hypersurfaces},
   journal={Ann. of Math.},
   volume={89},
   date={1969},
   number={},
   pages={187--197},
   issn={},
   review={},
}

\bib{LS}{article}{
   author={Le, Nam Q.},
   author={Sesum, Natasa},
   title={Blow-up rate of the mean curvature during the mean curvature flow
   and a gap theorem for self-shrinkers},
   journal={Comm. Anal. Geom.},
   volume={19},
   date={2011},
   number={4},
   pages={633--659},
   issn={1019-8385},
   review={\MR{2880211}},
}
\bib{LW}{article}{
   author={Li, Haizhong},
   author={Wei, Yong},
   title={$f$-minimal surface and manifold with positive $m$-Bakry-\'Emery Ricci curvature},
   journal={arXiv:1209.0895v1 [math.DG] 5 Sep 2012},
   volume={},
   date={},
   number={},
   pages={},
   review={},
}

\bib{Lo}{article}{
   author={Lott, John},
   title={Mean curvature flow in a Ricci flow background},
   journal={Comm. Math. Phys.},
   volume={313},
   date={2012},
   number={2},
   pages={517--533},
   issn={0010-3616},
   review={\MR{2942959}},
   doi={10.1007/s00220-012-1503-2},
}

\bib{MMT}{article}{
   author = {{Magni}, A. }
   author={ {Mantegazza}, C.}
   author= { {Tsatis}, E.},
    title = {Flow by mean curvature inside a moving ambient space},
  journal = {ArXiv e-prints,  0911.5130},
date ={ 2009},
   }

\bib{MW}{article}{
   author={Munteanu, Ovidiu},
   author={Wang, Jiaping},
   title={Geometry of manifolds with densities},
   journal={Preprint},
   volume={},
   date={},
   number={},
   pages={},
   issn={},
   review={},
}

\bib{M}{article}{
   author={Morgan, Frank},
   title={Manifolds with Density.  1118.53022.},
   journal={Notices of the Amer. Math. Soc.},
   volume={52},
   date={2005},
   number={8},
   pages={853-868},
   issn={1118.53022},
   review={\MR{ 2161354}},
}

\bib{S}{article}{
   author={Simons, J.},
   title={Minimal varieties in riemannian manifolds},
   journal={Ann. of Math.},
   volume={88},
   date={1968},
   number={2},
   pages={62--105},
   issn={0001-5962},
   review={\MR{0233295 (38 \#1617)}},		
}

\end{biblist}
\end{bibdiv}
\end{document}